\numberwithin{equation}{section}
\newtheorem{theorem}{Theorem}[section]
\newtheorem{proposition}[theorem]{Proposition}
\newtheorem{lemma}[theorem]{Lemma}
\theoremstyle{definition}
{
\newtheorem{remark}[theorem]{Remark}
\newtheorem{example}[theorem]{Example}

\newtheorem{defn}[theorem]{Definition}
}}
\newcommand{\cal}{\mathcal}
\newcommand{\BB}{{\cal B}}
\newcommand{\II}{{\cal I}}
\newcommand{\PP}{{\cal P}}
\newcommand{\VV}{{\cal V}}
\newcommand{\WW}{{\cal W}}
\newcommand{\XX}{{\cal X}}
\newcommand{\YY}{{\cal Y}}
\newcommand{\ZZ}{{\cal Z}}
\newcommand{\SRB}{\mathscr{E}}
\newcommand{\Nn}{{\mathbb{N}}}
\newcommand{\Rr}{{\mathbb{R}}}
\DeclareMathOperator{\sint}{int}
\def\dist{\operatorname{dist}}
\def\supp{\operatorname{supp}}
\def\ord{\operatorname{ord}}
\def\Per{\operatorname{Per}}
\newcommand{\abs}[1]{\left| #1\right|}
\newcommand{\id}  {\operatorname{id}}
\newcommand{\comment}[1]{}
\newcommand{\Sone}{[0,1]}
\begin{document}
\title[Attractor of piecewise expanding maps]{On the attractor of piecewise expanding maps of the interval}

\date{\today}

\author[Del Magno]{Gianluigi Del Magno}
\address{Universidade Federal da Bahia, Instituto de Matem\'atica\\
Avenida Adhemar de Barros, Ondina \\
40170110 - Salvador, BA - Brasil}
\email{gdelmagno@ufba.br}

\author[Lopes Dias]{Jo\~ao Lopes Dias}
\address{Departamento de Matem\'atica and CEMAPRE, ISEG\\
Universidade de Lisboa\\
Rua do Quelhas 6, 1200-781 Lisboa, Portugal}
\email{jldias@iseg.ulisboa.pt}

\author[Duarte]{Pedro Duarte}
\address{Departamento de Matem\'atica and CMAF \\
Faculdade de Ci\^encias\\
Universidade de Lisboa\\
Campo Grande, Edificio C6, Piso 2\\
1749-016 Lisboa, Portugal 
}
\email{pmduarte@fc.ul.pt}

\author[Gaiv\~ao]{Jos\'e Pedro Gaiv\~ao}
\address{Departamento de Matem\'atica and CEMAPRE, ISEG\\
Universidade de Lisboa\\
Rua do Quelhas 6, 1200-781 Lisboa, Portugal}
\email{jpgaivao@iseg.ulisboa.pt}

\begin{abstract}
We consider piecewise expanding maps of the interval with finitely many branches of monotonicity and show that they are generically combinatorially stable, i.e., the number of ergodic attractors and their corresponding mixing periods do not change under small perturbations of the map. Our methods provide a topological description of the attractor and, in particular, give an elementary proof of the density of periodic orbits. 
\end{abstract}

\maketitle

\section{Introduction}
\label{sec:introduction}
In this paper we study the dynamics of piecewise expanding maps of the interval. We fix $m\in \Nn$. A map  $f \colon \Sone\to \Sone$ is called {\it piecewise expanding} on $m$ intervals if there exist a constant $\sigma>1$ and intervals $I_1,\ldots, I_m$  such that  
\begin{enumerate}
\item $\Sone=\bigcup_{i=1}^m I_i$ and ${\rm int}(I_i)\cap {\rm int}(I_j)=\emptyset$ for $i\neq j$,
\item $f$ is $ C^{1} $ and $ | f'| \ge \sigma$ on each $ I_i $,
\item $f'$ is Lipschitz on each $ I_i $\footnote{This implies that $f'|_{\sint(I_i)}$ has a continuous extension to the closure of $I_i$.}.
\end{enumerate}

The dynamics of this class of maps has been widely studied as it finds applications in other areas of mathematics and in many other branches of science~\cite{BG97}. The theory of piecewise expanding maps is by now rather satisfactory from a probabilistic point of view.  Computer simulations show that typical orbits of piecewise expanding maps display chaotic behaviour as they approach an attractor. A way to describe the chaotic behaviour on the attractor is through the study of invariant measures~\cite{Young02,Viana}. It is well-known that piecewise expanding maps admit absolutely continuous (with respect to Lebesgue measure) invariant probability measures, known as acip's~\cite{L-Y}. They are physically meaningful since it allows us to understand the statistical behaviour of positive Lebesgue measure sets of orbits. 

Deterministic and random perturbations of piecewise expanding maps have been considered by many people, e.g.,~\cite{K82,G84,B89,BY94,K-L,DL,eg13}. A key concept is stability. Roughly speaking, a map is called stable if its statistical properties are robust under small perturbations of the map. In the context of piecewise expanding maps, a map $f$ with an acip $\mu$ is acip-stable if given any small perturbation $f_\varepsilon$ of $f$ we have that $\mu_\varepsilon$ converges to $\mu$ in the weak*-sense as $\varepsilon\to0$ where $\mu_\varepsilon$ is an acip of $f_\varepsilon$~\cite{eg13}.



%
%

%
%
%

In this paper we are interested in determining which piecewise expanding maps have robust combinatorics at the level of the attractor. To be more precise, we say that a piecewise expanding map $f$ is \textit{combinatorially stable} if the number of ergodic acip's of
$ f $ and the corresponding mixing periods do not change in a neighbourhood of $f$. 

The following theorem is the main result of this paper. 

\begin{theorem}\label{main thm}
Generic piecewise expanding maps on $ m $ intervals are combinatorially stable and the supports of their acips vary continuously with the map. 
\end{theorem}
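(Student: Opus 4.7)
The plan is to reduce Theorem~\ref{main thm} to the statement that the attractor of a piecewise expanding map $f$ admits a finite combinatorial description which is locally constant on a residual set of maps. Let $E(f):=\bigcup_{i=1}^m\partial I_i$ and $\Lambda(f):=\bigcap_{n\geq 0}\overline{f^n(\mathrm{int}(\Sone))}$. Using the expansion rate $\sigma>1$ and the observation that each $f^n(\mathrm{int}(\Sone))$ is a finite union of open intervals whose endpoints lie in the forward orbit of $E(f)$, I would show that $\Lambda(f)$ is a finite union of closed intervals with endpoints in that same forward orbit, and that it contains the supports of all ergodic acip's of $f$. Decomposing $\Lambda(f)$ into the supports of its ergodic acip's, and each such support into its cyclic mixing components, encodes the combinatorial data of the theorem in a finite directed graph whose vertices are the intervals of $\Lambda(f)$ and whose edges record the action of $f$: the number of ergodic acip's equals the number of terminal strongly connected classes, and the mixing period of each acip equals the period of the corresponding class.

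The next step is to isolate a residual set $\Gscr$ of good maps. I would define $\Gscr$ to consist of maps $f$ for which, up to some stabilization time $N(f)$ after which the combinatorial graph no longer changes, the only equalities $f^k(e)=f^\ell(e')$ among iterates of points $e,e'\in E(f)$ with $k,\ell\leq N(f)$ are those forced by the construction of $\Lambda(f)$. Openness of $\Gscr$ follows because it is cut out by finitely many strict inequalities in the ordering of finitely many iterates that depend continuously on $f$. For density, given $f\notin\Gscr$ and any neighbourhood of $f$, I would construct $\tilde f$ inside $\Gscr$ by localized perturbations of individual branches of $f$, each of which moves a single iterate off a forbidden position without creating new coincidences; a Baire-category argument then yields a residual $\Gscr$ on which all such conditions hold simultaneously.

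Finally, for $f\in\Gscr$ and $f_\varepsilon$ sufficiently close to $f$, the ordering of the relevant iterates of endpoints is preserved, so the combinatorial graph of $\Lambda(f_\varepsilon)$ is isomorphic to that of $\Lambda(f)$; this gives both the invariance of the number of ergodic acip's and of their mixing periods. Since the endpoints of each interval of the support of an acip are finite iterates of points of $E(f_\varepsilon)$ depending continuously on $\varepsilon$, the supports vary continuously in the Hausdorff metric. The main obstacle I foresee is controlling the stabilization time $N$ uniformly on a neighbourhood of $f$, since a priori a small perturbation could produce new components of the attractor at arbitrarily late times. I would overcome this by combining the uniform expansion $|f'|\geq\sigma$ with the Lipschitz bound on $f'$ and the bounded-variation framework to control the number of maximal intervals of monotonicity of $f^n$, ruling out late-born basic sets and making $N$ locally uniform.
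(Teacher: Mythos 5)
Your strategy rests on two claims that I don't think you can establish as stated, and both correspond to the places where the paper has to do real work. First, the local constancy of the combinatorial graph. The graph of $\Lambda(f)$ is built from iterates $f^k(e^\pm)$ of partition endpoints, and these depend continuously on $f$ only for as long as the orbit stays away from the discontinuity set $D$; your set $\Gscr$ forbids coincidences only up to a stabilization time $N(f)$, so a coincidence (or near-coincidence) at a later time is still possible, and under perturbation it can alter the perturbed map's attractor at times beyond $N(f)$. This is not a hypothetical worry: the Remark accompanying Figure~\ref{fig:C} exhibits a map satisfying the paper's generic separation condition for which arbitrarily small perturbations split a mixing component by opening a ``hole'', so the full transition graph is \emph{not} locally constant near generic maps, and any argument claiming it is must fail or must cut down to a non-open set. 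Your proposed remedy --- bounding the number of monotonicity branches of $f^n$ --- gives a bound of order $m^n$ and does not produce a locally uniform $N$. The paper sidesteps this by never stabilizing the fine structure: it only stabilizes the boundary data (boundary segments, Lemma~\ref{lem:boundary segment}) and builds from it a continuously varying trapping region $U_\mu(g)$ (Lemma~\ref{lem:trapping}) inside which the topology of the support is allowed to change.

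Second, the identification of ergodic acips with terminal strongly connected classes and of mixing periods with periods of those classes presupposes a Markov-type structure that is absent here: the image of a component of $\Lambda(f)$ need not be a union of components, so an edge ``$f(I)$ meets $J$'' does not propagate to topological transitivity, and a single forward-invariant terminal class can a priori carry several ergodic acips with disjoint supports that the graph cannot see. Proving that each trapping region contains \emph{exactly one} ergodic acip of the perturbed map, and that its mixing period is preserved, is the core of the paper's argument and requires the density of periodic points in the support (Theorem~\ref{pr:density}), the persistence of heteroclinic relations between their continuations (Lemmas~\ref{lem:periodic points} and~\ref{lem:stable hetero}), the $\eta/3$-dense net argument of Lemma~\ref{lem:injectivity}, and the coprime-period criterion for exactness (Proposition~\ref{prop:exact}). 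None of this is supplied by the graph combinatorics, so the step ``the number of ergodic acip's equals the number of terminal strongly connected classes'' is a genuine gap in your proposal.
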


The sufficient conditions on the maps for the main theorem to hold are given in Definition~\ref{def:H}. They are generic by Proposition~\ref{prop:generic}. In Section~\ref{sec stability} we give a proof and a precise formulation of Theorem~\ref{main thm} (see Theorem~\ref{thm:main2}). 



In addition, we prove several results for piecewise expanding maps used in the proof of Theorem~\ref{main thm}.  These results may be of independent interest. For example, in Section~\ref{sec top properties}, using elementary methods we prove the following. 

\begin{theorem}
The periodic points of any piecewise expanding map are dense in the support of the acips.
\end{theorem}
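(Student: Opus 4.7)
It suffices to prove density of periodic points in $\supp(\mu)$ for each ergodic acip $\mu$; the general case follows from the finite ergodic decomposition of acips known for piecewise expanding maps. Given $x \in \supp(\mu)$ and $\epsilon > 0$, the strategy is to locate a recurrent point $y$ near $x$ whose orbit nearly closes up after some time $n$, and then promote this to a genuine periodic orbit using the intermediate value theorem applied to $h(z) := f^n(z) - z$ on the cylinder of monotonicity of $f^n$ containing $y$.

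Since $\mu$ is absolutely continuous and $x \in \supp(\mu)$, the neighbourhood $U = (x - \epsilon/2, x + \epsilon/2) \cap \Sone$ has $\mu(U) > 0$. By Poincar\'e recurrence, almost every $y \in U$ returns to $U$ infinitely often, and one may further require that the forward orbit of $y$ avoids the finite set of discontinuities of $f$. Fix a return time $n$ with $f^n(y) \in U$, and let $C_n = C_n(y)$ be the cylinder of generation $n$ containing $y$; on $C_n$ the map $f^n$ is $C^1$ and monotonic with $|(f^n)'| \geq \sigma^n$, so $h$ is strictly monotonic with $|h'| \geq \sigma^n - 1$ and satisfies $|h(y)| = |f^n(y) - y| < \epsilon$.

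Provided that $\dist(y, \partial C_n) \geq \epsilon/(\sigma^n - 1)$, the mean value theorem shows that $h$ takes values of opposite signs at the endpoints of $C_n$, so the intermediate value theorem produces a zero $z^* \in C_n$ with $f^n(z^*) = z^*$ and
\[ |z^* - x| \leq \frac{|h(y)|}{\sigma^n - 1} + |y - x| < \epsilon \]
for $n$ large. Thus $z^*$ is a periodic point of $f$ within $\epsilon$ of $x$.

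The main obstacle is verifying the interior condition $\dist(y, \partial C_n) \geq \epsilon/(\sigma^n - 1)$: since $\partial C_n$ is contained in the set of $f^k$-preimages ($0 \leq k < n$) of the endpoints of $I_1, \dots, I_m$, this is a quantitative non-resonance bound on the approach of the orbit of $y$ to the discontinuity set. For $\mu$-generic $y$ such a bound is expected to hold along a subsequence of $n$ by a Borel--Cantelli-type argument grounded in the absolute continuity of $\mu$ and the exponential contraction of the required threshold; combining this full-measure set with the set of recurrence times to $U$ produces the admissible pair $(y, n)$. A cleaner alternative avoiding this step is to pass to the spectral cycle, use topological exactness of $f^p$ on each component of $\supp(\mu)$ to guarantee the existence of a ``full'' cylinder $C \subseteq U$ with $f^{pN}(C) \supseteq \supp(\mu) \cap (\text{component}) \supseteq C$, and apply the same intermediate value argument; but the recurrence-based route above is the most elementary.
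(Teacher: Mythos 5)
Your reduction to a single ergodic acip and your intermediate-value mechanism (a zero of $h(z)=f^n(z)-z$ on the monotonicity cylinder $C_n(y)$, provided $y$ lies at distance at least $\epsilon/(\sigma^n-1)$ from $\partial C_n(y)$) are both sound. The problem is that the one step you defer --- the interior condition --- is not a technicality but the entire difficulty, and the Borel--Cantelli argument you gesture at does not close it. Each branch domain satisfies $|I_j|\le 1/\sigma$, so $m\ge\sigma$ and the number of generation-$n$ cylinders is at least $\sigma^n$; the union bound for the measure of $\{y:\dist(y,\partial C_n(y))<\epsilon/(\sigma^n-1)\}$ is therefore of order $m^n\epsilon/\sigma^n$, which need not be summable or even tend to zero (indeed the \emph{average} cylinder length is at most $\sigma^{-n}$, so a typical point can be closer to $\partial C_n(y)$ than your threshold). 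Equivalently, what you need is a lower bound on $|f^n(C_n(y))|$ along a subsequence of return times, and piecewise expanding maps have no automatic ``large image'' property: cylinder images can be arbitrarily short, which is exactly why the Lasota--Yorke theory works with variation rather than cylinder geometry. Your ``cleaner alternative'' (a full cylinder $C\subseteq U$ with $f^{pN}(C)$ covering the component) is the right idea but is only asserted: topological exactness gives that $f^{pN}(U)$ covers the component as a union over many branches, not that a single branch of $f^{pN}|_U$ does.

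The paper fills precisely this hole with two deterministic lemmas and needs no recurrence or measure estimate. First, after replacing $f$ by an iterate $f^k$ whose least expansion exceeds $2$, a counting argument (Lemma~\ref{le:gr}) shows that \emph{every} interval $U$ contains a subinterval $W$ on which some $f^{ik}$ is a diffeomorphism onto a full branch domain $\sint(I_j)$: otherwise $f^{ik}(U)$ would consist of at most $2^i$ intervals of total length at least $\sigma^i|U|$, forcing one of them to have length $(\sigma/2)^i|U|\to\infty$. Second, ergodicity together with the saturation Lemma~\ref{le:Omeganexact} shows that finitely many further iterates carry that branch domain over $U$ through a single additional branch. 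Composing yields an expanding diffeomorphism from a subinterval $W'\subset U$ onto $U$, hence a fixed point in $U$. To salvage your route you would have to prove the quantitative non-resonance bound (which appears to require distortion or large-image hypotheses you do not have), or import something equivalent to Lemma~\ref{le:gr}; as written, the proof has a genuine gap at its central step.
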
  

The density of periodic points might not be surprising, nevertheless there are no references in the literature as far we are aware. 
In addition to determining the number of ergodic 
components, the method for proving Theorem~\ref{main thm} provides a topological description and the continuity of the immediate basins of attraction, which complements the results obtained by a spectral approach~\cite{K82,K-L}.

The strategy for proving Theorem~\ref{main thm} is the following. To any generic piecewise expanding map $f$ and ergodic acip $\mu$ of $f$  we associate a trapping region $U_\mu(g)$ for small perturbations $g$ of $f$. This trapping region contains the support of an ergodic acip $\mu_g$ of the perturbed map $g$. Using the density of periodic points in the support of the acips we show that $\mu_g$ is unique, i.e., no other ergodic acip  of $g$ has its support inside $U_\mu(g)$. So we have a well defined map $\Theta_g\colon\mu\mapsto\mu_g$ from the set of ergodic acips of $f$ to the set of ergodic acips of $g$. Then we prove that $\Theta_g$ is a bijection. This shows that $f$ and $g$ have the same number of ergodic acips. Working in a similar way, we conclude that $f$ and $g$ have the same number of mixing components.

We believe that the proof of Theorem~\ref{main thm} might be adapted to cover two-dimensional hyperbolic maps with singularities which are close, in an appropriate sense, to a one-dimensional piecewise expanding map. A special class of two-dimensional hyperbolic maps with singularities are the strongly dissipative polygonal billiards~\cite{lxds}. The combinatorial stability for this class of dissipative billiards will be treated in a separate paper.
 

The rest of the paper is organized as follows. In Section~\ref{sec preliminaries} we introduce some notation and recall a well-known theorem concerning the existence of acips for piecewise expanding maps. Several topological properties of the attractor are proved in Section~\ref{sec top properties}. In Section~\ref{sec stability}, we prove our main result regarding the combinatorial stability of piecewise expanding maps.

\section{Preliminaries}\label{sec preliminaries}
Let $f$ be a piecewise expanding map. Throughout the paper, we use the standard abbreviation \emph{acip} for an invariant probability measure of $f$ that is absolutely continuous with respect to the Lebesgue measure of $ \Sone $. We also write `($ \bmod \; 0 $)' to specify that an equality holds up to a set of zero Lebesgue measure. The length of an interval $I\subseteq [0,1]$ is denoted by $|I|$. Given any subset $A\subset [0,1]$, its boundary $\partial A$ and interior $\sint (A)$ are taken relative to $\Rr$.

\subsection{Existence of acips}

Given a Borel measure $ \mu $, we denote by $ \supp\mu $ the smallest closed set of full $ \mu $-measure. 

We say that the pair $(f,\mu)$, where $\mu$ is an acip of $f$, is \textit{exact}\footnote{By \cite[Theorem 3.4.3]{BG97}, $(f,\mu)$ is exact if and only if for any Borel set $B\subset[0,1]$ with $\mu(B)>0$, $\lim_{n\to\infty}\mu(f^n(B))=1$.} if
$$
\bigcap_{n=0}^\infty f^{-n}(\BB)
$$
consists of $\mu$-null sets and its complements. Here, $\BB$ denotes the Borel $\sigma$-algebra.

\begin{theorem}
\label{th:acip}
Let $f$ be a piecewise expanding map.
Then, 
\begin{enumerate}
\item  there exists $ 1 \le k \le m $ such that
$ f $ has exactly 
$k$ ergodic acip's $ \mu_{1},\ldots,\mu_{k} $ with bounded variation densities,
\item for every $ 1 \le i \le k $, there exist $k_i \in \Nn $ and 
an acip $\nu_i$ such that
\begin{enumerate}
\item
$$
\mu_i=\frac1{k_i}\sum_{j=0}^{k_i-1}f_*^{j}\nu_i
$$
\item
$(f^{k_i}, f_*^j\nu_i)$ is exact  for all $j$,
\end{enumerate}
\item $ \supp f^{j}_{*} \nu_{i} $ and $ \supp\mu_{i} $ are both unions of finitely many pairwise disjoint intervals,   for all $j$,
\item the union of the basins of $ \mu_{1},\ldots,\mu_{k} $ is equal $ (\bmod \; 0) $ to $ \Sone $.
\end{enumerate}
\end{theorem}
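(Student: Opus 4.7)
The plan is to prove Theorem~\ref{th:acip} by combining the classical Lasota--Yorke approach with the standard ergodic decomposition for piecewise expanding maps. I would structure it as follows.

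First, I would establish a Lasota--Yorke inequality for the Perron--Frobenius operator $P$ of $f$ acting on $BV(\Sone)$. Using the expansion $|f'|\ge \sigma>1$ and the Lipschitz bound on $f'$ on each branch, one derives, after possibly passing to an iterate, an inequality of the form
\begin{equation*}
V(P^n g)\le \alpha\, V(g)+C\|g\|_{L^1}, \qquad \alpha<1.
\end{equation*}
Applying this to the Lebesgue density $\mathbf{1}$, the Cesàro averages $\frac{1}{N}\sum_{n=0}^{N-1} P^n \mathbf{1}$ have uniformly bounded variation, hence admit $L^1$-limits, which are fixed points of $P$ lying in $BV$. This yields at least one acip with BV density. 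By the compactness of the $BV$ unit ball in $L^1$ and the Ionescu-Tulcea--Marinescu theorem, $P$ is quasi-compact on $BV$, so the fixed-point subspace is finite-dimensional; hence the set of acips is a finite-dimensional simplex and has only finitely many extreme points, which are the ergodic acips $\mu_1,\dots,\mu_k$.

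Next, I would address the support structure and the bound $k\le m$ in (1), and extract the cyclic decomposition in (2). Two ergodic acips have essentially disjoint supports, and a standard lemma shows that any connected component of $\supp \mu_i$ has length bounded below (the density of $\mu_i$ is bounded above, while $\mu_i$-mass is distributed among components that must map onto each other under iteration, forcing a positive lower bound). This, combined with the fact that each component of $\supp \mu_i$ is eventually contained in some $I_j$ after iteration, gives $k\le m$ and proves (3) for $\mu_i$. For the exact decomposition, I would apply the Rokhlin/Hopf cyclic decomposition for measure-preserving endomorphisms: since $(f,\mu_i)$ is ergodic and $\mu_i$ is absolutely continuous, $f$ permutes cyclically finitely many sets $A_0,\dots,A_{k_i-1}$ of equal $\mu_i$-measure such that $(f^{k_i},\mu_i|_{A_j})$ is exact; setting $\nu_i:=k_i\,\mu_i|_{A_0}$ yields the formula in (2a), and (2b) is then automatic. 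The same bounded-variation argument applied to the density of each $f_*^j\nu_i$ gives the finite-interval structure of $\supp f_*^j\nu_i$ in (3).

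Finally, for the basin coverage (4), I would use that $\frac{1}{N}\sum_{n=0}^{N-1} P^n \mathbf{1}$ converges in $L^1$ to a BV fixed point $h$, which must be a convex combination $h=\sum c_i g_i$ where $g_i=d\mu_i/d\text{Leb}$. Since $\int h = 1$ and each $g_i$ integrates to $1$, $\sum c_i=1$; integrating against the indicator of $\Sone\setminus\bigcup_i \supp\mu_i$ and applying Birkhoff's ergodic theorem to each ergodic component then shows Lebesgue-a.e.\ point lies in the basin of some $\mu_i$.

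The main obstacle I anticipate is verifying that the supports are genuine finite unions of intervals rather than merely finite unions of Cantor-like sets of positive measure. This relies on the subtle fact that BV densities admit a lower semi-continuous version whose positivity set is open, combined with the invariance property $P g_i = g_i$, which propagates positivity across the connected components. The other delicate point is the lower bound on the length of these components, needed to prove $k\le m$; here one uses both the expansion and the piecewise structure to argue that components must be "large enough" to contain a full image of some $I_j$ after sufficiently many iterates.
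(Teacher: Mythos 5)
Your proposal and the paper's proof are not really comparable in kind: the paper does not prove Theorem~\ref{th:acip} from scratch, it only checks the hypothesis that $1/|f'|$ has bounded variation and then cites \cite{BG97} (Theorems 7.2.1, 8.2.1 and 8.2.2) for Parts (1)--(3) and \cite{Viana} for Part (4). What you have written is essentially a reconstruction of the arguments contained in those references: Lasota--Yorke inequality, quasi-compactness of the transfer operator on $BV$, finite-dimensionality of the fixed space, the cyclic/exact decomposition, the interval structure of the supports via a lower bound on the invariant densities, and the basin coverage. As a roadmap this is the standard and correct route, and it buys a self-contained treatment where the paper settles for a citation.

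Two steps of your sketch are under-justified as written and would need the piecewise-expanding structure rather than general ergodic theory. First, for (2b) you say exactness of $(f^{k_i}, f_*^j\nu_i)$ ``is then automatic'' from the Rokhlin/Hopf cyclic decomposition; it is not. The general cyclic decomposition of an ergodic endomorphism only yields components on which the return map has no further roots of unity in its point spectrum --- it does not give exactness (nor even weak mixing) for an arbitrary measure-preserving endomorphism. Here one must use that for piecewise expanding maps with $BV$ densities the peripheral spectrum of the transfer operator is a finite union of finite cyclic groups and that weak mixing implies exactness for this class (this is precisely the content of \cite[Theorems 7.2.1 and 8.2.1]{BG97}). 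Second, for (4) you cannot ``apply Birkhoff to each ergodic component'' directly, because Lebesgue measure is not invariant: Birkhoff gives genericity of $\mu_i$-a.e.\ point, which transfers to Lebesgue-a.e.\ point of $\supp\mu_i$ only after one knows the density $g_i$ is bounded below on its support, and one still needs a separate argument that Lebesgue-a.e.\ point of $\Sone$ eventually enters some $\supp\mu_i$ (e.g.\ by applying the Lasota--Yorke construction to the set of points whose orbits avoid $\bigcup_i \sint(\supp\mu_i)$ and deriving a contradiction if it has positive measure). With these two points repaired, your outline matches the cited proofs.
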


\begin{proof}
Notice that, by \cite[pp. 17 and Theorem 2.3.3]{BG97}, $1/|f'|$ has bounded variation. Thus, 
Parts~(1) and (2) are proved in~\cite[Theorems 7.2.1 and 8.2.1]
{BG97}. It suffices to prove Part~(3) for $ \supp f^{j}_{*} \nu_{i} $ which we do applying~\cite[Theorem~8.2.2]{BG97} to the acip $ f^{j}_{*} \nu_{i} $ of $ f^{k_{i}} $. 
Part~(4) is proved in \cite[Theorem 3.1, Corollary 3.14]{Viana}. 
\end{proof}

Let
\[ 
\Lambda_{i,j} := \supp f_*^j\nu_i
\]  
We call $ \Lambda_{i,1},\ldots,\Lambda_{i,k_{i}} $ and $ k_{i} $ in Theorem~\ref{th:acip} the \textit{mixing}\footnote{In view of Part~(2) of Theorem~\ref{th:acip}, one may be tempted to call these components exact rather than mixing. However, mixing and exactness are equivalent concepts for a piecewise expanding map~\cite[Theorem~7.2.1]{BG97}.} \textit{components} and the \textit{mixing period} of $ \mu_{i} $, respectively. We also define $\Per(\mu_i):=k_i$.

\subsection{Topology}

Now we introduce a topology on the space of piecewise expanding maps. Recall that a piecewise expanding map $f$ is defined by a partition $\PP_f=\{I_1,\ldots,I_m\}$ of the interval $[0,1]$ with boundary points 
\begin{equation}\label{partition}
0=a_0<a_1<\cdots<a_{m-1}<a_m=1.
\end{equation}
To stress the dependence of $I_i$ and $a_i$ on $f$ we shall write $I_i(f)$ and $a_i(f)$, respectively.

Denote by $\XX_m$ the set of piecewise expanding maps on $m$ intervals. Given $f,g\in\XX_m$ define

$$
d(f,g):=\rho(\PP_f,\PP_g)+\rho_0(f,g)+\rho_{\text{Lip}}(f',g'),
$$
where
\begin{align*}
\rho(\PP_f,\PP_g)&:=\max_{1\leq i\leq m-1}|a_i(f)-a_i(g)|\\
\rho_0(f,g)&:=\max_{1\leq i\leq m} \|f-g\circ\eta_i\|_{C^0(I_i(f))} \\
\rho_{\text{Lip}}(f',g')&:=\max_{1\leq i\leq m}\left\{\|f'-g'\circ\eta_i\|_{\text{Lip}(I_i(f))}+\|g'-f'\circ\eta_i^{-1}\|_{\text{Lip}(I_i(g))}\right\}
\end{align*}
and $\eta_i:I_i(f)\to I_i(g)$ is the affine function that maps $a_i(f)$ to $a_i(g)$. Here, $\|\cdot\|_{C^0}$ and $\|\cdot\|_{\text{Lip}}$ denote the usual norm of a continuous and Lipschitz function, respectively. Clearly, $d$ is a metric, thus $(\XX_m,d)$ is a metric space. In fact, $(\XX_m,d)$ is a complete metric space.

In this paper, a neighbourhood $\VV$ of $f$ is always to be understood in the metric $d$. Notice that, given any sequence of piecewise expanding maps $f_n\in\XX_m$ converging to $f\in \XX_m$, $f_n$ also converges to $f$ in the Skorokhod-like metric (cf.~\cite{eg13}).

\section{Topological properties}\label{sec top properties}

Let $f$ denote a piecewise expanding map. In this section we derive several topological properties of the attractor of $f$. Some of these properties will be used to prove the combinatorial stability of the attractor (see Theorem~\ref{thm:main2}).

\subsection{Boundary segments}

Let
\begin{equation}
\label{def:set D}
D=D_f:=\{a_1,\ldots,a_{m-1}\},
\end{equation}
where $a_i$ are the points in \eqref{partition} forming the partition $\PP_f$ of $f$. We denote by $D_{f^n}$ the set of points $x\in[0,1]$ such that $f^k(x)\in D$ for some $0\leq k<n$.
Define $ f(x^\pm) := \lim_{y \to x^{\pm}} f(y) $ and $ f'(x^\pm) := \lim_{y \to x^{\pm}} f'(y) $ for every $ x \in (0,1) $. To simplify the presentation, when $x\in\{0,1\}$ we set $f(x^\pm)=f(x)$ and $f'(x^\pm)=f'(x)$.
Similarly, we define $f^n(x^\pm):=\lim_{z\to x^\pm} f^n(z)$, for $n\geq 0$


\begin{defn} 
\label{de:forward}

A \textit{forward orbit} of $ x\in[0,1]$ is a sequence $\{x_n\}_{n\geq0}$ such that $x_0=x$ and either $x_n= f^n(x_0^+)$ for every $n\geq0$ or else
$x_n= f^n(x_0^-)$ for every $n\geq0$. 
An \textit{orbit segment} starting at $ x \in [0,1] $ and ending at $ y \in [0,1]$ is a finite sequence $ \{x_{0},\ldots,x_{n}\} $ with $ n>0 $ such that $ x_{0}=x $, $ x_{n}=y $ and either $ x_{k} = f^k(x_0^+) $ for every $ k=0,\ldots,n $ or else
$ x_{k} = f^k(x_0^-) $ for every $ k=0,\ldots,n $. The integer $n$ is called the \textit{length} of the orbit segment.


\end{defn}

Notice that any point $ x \in (0,1) $ has exactly two distinct forward orbits if and only if $x$ is a point of discontinuity for some $f^n$ with $n\in\Nn$. A point $x\in[0,1]$ is called \textit{regular} if $x\notin D_{f^n}$ for every $n\geq1$. Clearly, Lebesgue almost every $x\in[0,1]$ is regular.

Let $\mu$ be an ergodic acip of $f$ and define $$A_\mu:=\supp\mu.$$
\begin{defn} \label{boundary segment:def}
An orbit segment 
$\{x_0,\ldots, x_n\}$ is called a \textit{boundary segment} of $\mu$ if 
\begin{enumerate}
\item $x_0\in  D\cap \sint (A_\mu)$,
\item $x_i\in \partial A_\mu$, for all $i=1,\ldots, n-1$,
\item either $x_n=x_k$ for some $1\leq k<n$ or else $x_n\in \sint(A_\mu)$.
\end{enumerate}
\end{defn}


%


In the following lemma we show that the boundary of $A_\mu$ is determined by boundary segments.

\begin{lemma}\label{lem:boundary segment}
Every $x\in\partial A_\mu$ belongs to a boundary segment of $\mu$.
\end{lemma}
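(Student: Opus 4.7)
The plan is to build, for each $p\in\partial A_\mu$, a boundary segment containing $p$ by tracing signed preimages backward from $p$ and stopping once an interior discontinuity is reached. First I would observe that Theorem~\ref{th:acip}(3) implies $A_\mu$ is a finite union of pairwise disjoint closed intervals, so $\partial A_\mu$ is a finite set. Then, using $f_{\ast}\mu=\mu$ and the fact that each $f(I_j\cap A_\mu)$ is compact, so that $\bigcup_j f(I_j\cap A_\mu)$ is closed and dense in $A_\mu$, I would deduce that $f(A_\mu)=A_\mu$ set-theoretically.

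The key technical step is the following local observation: for any $q\in\partial A_\mu$ and any $y\in A_\mu$ with $f(y^\epsilon)=q$ for some sign $\epsilon$, one must have $y\in\partial A_\mu\cup D$. Indeed, if $y$ were in $\sint(A_\mu)\setminus D$, then $y$ would lie in the interior of some $I_j$, and $f|_{I_j}$ is a local homeomorphism at $y$ because $|f'|\ge\sigma>1$. Hence $f|_{I_j}$ would send a neighborhood of $y$ inside $A_\mu$ onto a neighborhood of $q$ inside $f(A_\mu)\subseteq A_\mu$, placing $q$ in $\sint(A_\mu)$ and contradicting $q\in\partial A_\mu$.

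Given $p\in\partial A_\mu$, I would iteratively construct a backward sequence $y^{(0)}=p, y^{(1)}, y^{(2)}, \ldots$ in $A_\mu$ together with a coherent sign chain, where each $y^{(k+1)}$ is a signed preimage of $y^{(k)}$. By the observation above, each $y^{(k)}$ lies in $\partial A_\mu\cup D$. Since this set is finite, either the sequence reaches $D\cap\sint(A_\mu)$ in finitely many steps, or else the pairs $(y^{(k)},\epsilon^{(k)})$ eventually repeat. In the first case, setting $x_0:=y^{(N)}$ and reversing the chain yields an orbit segment from $x_0$ through $p$; I would then extend it forward, keeping iterates on $\partial A_\mu$ as long as possible. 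By finiteness of $\partial A_\mu$ this forward extension must terminate, either by repeating an earlier point or by entering $\sint(A_\mu)$, which is exactly condition~(3) of the definition.

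The main obstacle is excluding the cycling case: one must show that no backward trajectory of $p$ remains forever in $\partial A_\mu$ without ever meeting $D\cap\sint(A_\mu)$. The preimage tree is finitely branching, and one may choose any $y\in A_\mu$ with $f(y^\epsilon)=y^{(k)}$ at each level, so it suffices to forbid an isolated periodic orbit in $\partial A_\mu$ entirely disjoint from the orbits of interior discontinuities. For this I would exploit the structural identity that each endpoint of a component of $A_\mu$ arises as an image of $\partial(I_j\cap A_\mu)\subseteq D\cup\{0,1\}\cup\partial A_\mu$ under some $f|_{I_j}$; iterating this together with $f(A_\mu)=A_\mu$ forces every point of $\partial A_\mu$ into some forward image of $D\cap\sint(A_\mu)$, ruling out the pathological isolated cycle and closing the proof.
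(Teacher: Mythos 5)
Your overall architecture --- trace signed preimages of $p$ backward, use the local-homeomorphism observation to keep them in $\partial A_\mu\cup D$, stop upon reaching $D\cap\sint(A_\mu)$, then extend forward and use finiteness of $\partial A_\mu$ to terminate --- is essentially the paper's strategy, and your key technical step (a signed preimage in $A_\mu$ of a boundary point must lie in $\partial A_\mu\cup D$) is correct and identical to the one used there. The gap is precisely the step you flag as the main obstacle: you do not actually rule out the cycling case. The ``structural identity'' only asserts that each endpoint of a component of $A_\mu$ is the image of \emph{some} point of $D\cup\{0,1\}\cup\partial A_\mu$; it is entirely consistent with this that the only preimage your recursion ever produces is again a boundary point, forever. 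The tent map already shows the danger: $0\in\partial A_\mu$ is a fixed point, so one backward branch of $0$ cycles at $0$ indefinitely, and nothing in the structural recursion forces you onto the other branch $0\leftarrow 1\leftarrow 1/2$ that does reach $D\cap\sint(A_\mu)$. More tellingly, your argument never uses ergodicity of $\mu$, yet every ingredient you invoke (forward invariance, $A_\mu$ a finite union of closed intervals, covering of $A_\mu$ by the sets $f(I_j\cap A_\mu)$) holds for an arbitrary closed forward-invariant finite union of intervals, and for such a set the conclusion can genuinely fail (e.g.\ a fixed boundary point whose only preimage in the set is itself). So some dynamical input beyond the structural identity is unavoidable.

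The paper supplies that input as follows: let $E$ be the set of all signed preimages, under all iterates, of $x$ inside $A_\mu$. If $E\subseteq\partial A_\mu$, then $E$ is finite, and for $\delta$ small the $\delta$-neighbourhood $E_\delta$ of $E$ in $A_\mu$ satisfies $f^{-1}(E_\delta)\cap A_\mu\subseteq E_\delta$, whence $f(A_\mu\setminus E_\delta)\subseteq A_\mu\setminus E_\delta$, a forward-invariant set of $\mu$-measure strictly between $0$ and $1$ --- contradicting ergodicity. Hence some preimage of $x$ lies in $\sint(A_\mu)$, and choosing one with minimal backward time forces it into $D\cap\sint(A_\mu)$. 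You should replace your final paragraph with an argument of this kind (or some other genuine use of ergodicity). A secondary, fixable point: assembling the backward chain one signed preimage at a time requires the signs to cohere into a single one-sided orbit $f^k(x_0^\pm)$ as Definition~\ref{de:forward} demands; the paper sidesteps this by defining $E$ directly through the condition $f^n(y^\pm)=x$.
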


\begin{proof}
 
 We claim that every $ x \in \partial A_\mu $ is contained in an orbit segment $\{x_0,\ldots,x_p\}$ starting at a point in  $ D\cap \sint (A_\mu)$ such that $x_k\in \partial A_\mu$ for every $1\leq k \leq p$.
Indeed given $x\in\partial A_\mu$,  let
$$
E=\left\{y\in A_\mu\colon \exists\,n\in\Nn,\,  f^n(y^\pm)=x\right\}.
$$
Notice that $\sint(A_\mu)\cap E\neq\emptyset$. Indeed, suppose by contradiction that $E\subseteq \partial A_\mu$.  Denoting by $E_\delta$ a $\delta$-neighbourhood of $E$ in $A_\mu$,
since $E$ is finite  we have that $f^{-1}(E_\delta)\cap A_\mu \subseteq E_\delta$ for some  small enough $\delta>0$. Thus, $f(A_\mu\setminus E_\delta)\subseteq A_\mu\setminus E_\delta$ which contradicts the ergodicity of $\mu$. So let $y\in \sint(A_\mu)\cap E$ such that $f^n(y^{\pm})=x$ for the least possible $n\geq 1$. Then $y\in  D$, because $f(y^\pm)\in\partial A_\mu$. This proves the claim.

Consider now a forward orbit $\{z_n\}_{n\geq 0}$ of $x$ contained in $A_\mu$. Such a forward orbit always exists since $f(A_\mu\setminus D)\subseteq A_\mu$.
If $z_n\in \partial A_\mu$ for all $n\ge 1$, since $\partial A_\mu$ is finite, there exists $1\leq i<n$ such that $z_n=z_i$. Otherwise, there exists $n\geq 1$ such that $z_n\in \sint(A_\mu)$. In any case, we obtain a boundary segment containing $x$.

\end{proof}

\begin{example}
Consider the tent map $f\colon[0,1]\to[0,1]$ defined by $f(x)=2x$ if $x\leq 1/2$ else $f(x)=2-2x$.  Notice that $f$ has a unique ergodic acip $\mu$ which is the Lebesgue measure on $[0,1]$. This means that $\partial A_\mu=\{0,1\}$. The tent map has a single boundary segment $\{1/2,1,0\}$.
\end{example}


\begin{lemma}
\label{le:sing-periodic}
Suppose that $ \mu_{1} $ and $ \mu_{2} $ are two distinct ergodic acip's of $ f $. If $A_{\mu_1}\cap A_{\mu_2}\neq\emptyset$, then there exist a boundary segment of $ \mu_{1} $ and a boundary segment of $ \mu_{2} $ both ending at the same point of $ D $ or at the same periodic point.
\end{lemma}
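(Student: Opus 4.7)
The plan is to locate a common point $w$ of $\partial A_{\mu_1} \cap \partial A_{\mu_2}$ that is either periodic or lies in $D$, and then to adapt the construction of Lemma~\ref{lem:boundary segment} to produce boundary segments of both measures terminating at $w$.

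\emph{First}, I would observe that $A_{\mu_1} \cap A_{\mu_2}$ is a finite subset of $\partial A_{\mu_1} \cap \partial A_{\mu_2}$. Since $\mu_1 \ne \mu_2$ are ergodic acip's they are mutually singular, and by Theorem~\ref{th:acip}(3) each $A_{\mu_i}$ is a finite union of nondegenerate closed intervals. If a point $x \in A_{\mu_1} \cap A_{\mu_2}$ lay in $\sint(A_{\mu_1})$, then a one-sided neighborhood of $x$ inside the component of $A_{\mu_2}$ at $x$ would also lie in $A_{\mu_1}$, producing a positive-Lebesgue overlap and contradicting singularity. Hence the intersection is finite and contained in both boundaries.

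\emph{Second}, I would show that $\partial A_{\mu_1} \cap \partial A_{\mu_2}$ is forward-invariant under $f$ off $D$. For $x$ in this set with $x \notin D$, singularity forces $A_{\mu_1}$ and $A_{\mu_2}$ to occupy opposite sides of $x$ (otherwise a one-sided neighborhood would again lie in both supports). Since $f$ is $C^1$ and strictly monotone on the partition interval containing $x$, it maps the two one-sided neighborhoods of $x$ to one-sided neighborhoods of $f(x)$ on opposite sides, each inside the corresponding support, whence $f(x) \in \partial A_{\mu_1} \cap \partial A_{\mu_2}$.

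\emph{Third}, choose $x \in A_{\mu_1} \cap A_{\mu_2}$. If $x \in D$ set $w = x$. Otherwise iterate $f$; by the forward invariance above and the finiteness of the intersection, the orbit either becomes periodic inside $\partial A_{\mu_1} \cap \partial A_{\mu_2}$---yielding a common periodic point $w$---or else first reaches $D$ at some iterate $w = f^n(x) \in D \cap \partial A_{\mu_1} \cap \partial A_{\mu_2}$. Finally, for each $i=1,2$, the argument in the proof of Lemma~\ref{lem:boundary segment} produces an orbit segment from a starting point in $D \cap \sint(A_{\mu_i})$ that reaches $w$ while staying in $\partial A_{\mu_i}$; when $w$ is periodic, appending one full revolution around the cycle through $w$ makes $w$ a repeated point in the segment and hence satisfies condition~(3) of Definition~\ref{boundary segment:def}, giving a genuine boundary segment of $\mu_i$ ending at $w$.

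The main obstacle I anticipate is condition~(3) in the $D$-case when the common $D$-point $w$ is not itself periodic: the formal definition of a boundary segment allows termination only at a repeated point or in the interior of $A_\mu$, and a non-periodic $w \in D \cap \partial A_{\mu_i}$ satisfies neither. I would address this by exploiting that at $w \in D$ the forward orbit splits into two one-sided continuations, one remaining in each support, and arguing that this branching structure makes $w$ a legitimate common ``point of $D$'' terminus for boundary segments of both measures in the sense required by the statement.
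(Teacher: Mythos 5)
Your proof is correct and follows essentially the same route as the paper's: pass to a point of $\partial A_{\mu_1}\cap\partial A_{\mu_2}$, use forward invariance off $D$ and the finiteness of this set to locate on its forward orbit either a common point of $D$ or a common periodic point, and then invoke Lemma~\ref{lem:boundary segment} to supply the boundary segments. Your second and fourth steps (the ``opposite sides'' justification of forward invariance, and the discussion of condition~(3) of Definition~\ref{boundary segment:def} in the $D$-case) are in fact more careful than the paper's one-line treatment of the same points.
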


\begin{proof} 
%

Let $ x \in \partial A_{\mu_{1}} \cap \partial A_{\mu_{2}} $, and suppose that $ f^{n}(x) \notin D $ for all $ n \ge 0 $, i.e., the forward orbit of $ x $ does not contain element of $ D $. Hence, $ f^{n+1} $ is continuous at each $ f^{n}(x) $. Since $ f(A_{\mu_{i}} \setminus D) \subset A_{\mu_{i}} $ for $ i=1,2 $, it follows that $ f^{n}(x) \in \partial A_{\mu_{1}} \cap \partial A_{\mu_{2}} $ for all $ n \ge 0 $. However, $ \partial A_{\mu_{1}} \cap \partial A_{\mu_{2}} $ is finite, and so $ x $ must be pre-periodic. By Lemma~\ref{lem:boundary segment}, the claim follows.
\end{proof}

\subsection{The separation condition}\label{sec:assumptions}

In this section we introduce a generic condition that is sufficient to prove the combinatorial stability in Section~\ref{sec stability}.

\begin{defn}\label{def:H}
We say that $f$ satisfies the \textit{separation condition} if for every ergodic acips $\mu,\nu$ of $f$ the following holds:
\begin{enumerate}
\item\label{H:1} $A_\mu\cap A_\nu=\emptyset$ whenever $\mu\neq \nu$.
\item\label{H:2} The mixing components of $\mu$ are separated, i.e.,
$$\Lambda_{i,j} \cap \Lambda_{i',j'}=\emptyset,\quad\text{whenever}\quad (i,j)\neq (i',j').$$
\item\label{H:3} $D_{f^k}\cap \partial A_\mu=\emptyset$ where $k=\Per(\mu)$.
\item\label{H:4} $(0,1)\cap \partial A_\mu $ does not contain any periodic point.
\end{enumerate}
\end{defn}

\begin{example}\label{example}
The doubling map $f(x)= 2x\pmod{1}$, $x\in[0,1]$ satisfies the separation condition. In fact, any piecewise expanding map $f$ such that $(f,\mu)$ is mixing and the support of $\mu$ is $[0,1]$, satisfies the separation condition.
\end{example}

\begin{figure}[h]
     \subfloat[A family of piecewise expanding maps where two ergodic acips collide. The middle map has an orbit segment connecting two discontinuous points.\label{subfig-2:dummy}]{%
       \includegraphics[width=0.85\textwidth]{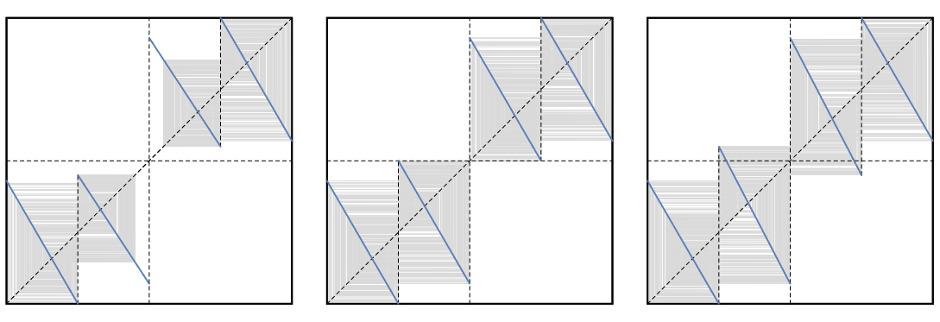}
     }
     \\
          \subfloat[The discontinuity of the Lorenz map $f_a(x)=a\,(x-1/2)\pmod{1}$ with $a=\sqrt{2}$ is pre-periodic. For $a<\sqrt{2}$ the Lorenz family has one ergodic acip of period $2$, which becomes exact for $a > \sqrt{2}$.\label{subfig-1:dummy}]{%
       \includegraphics[width=0.85\textwidth]{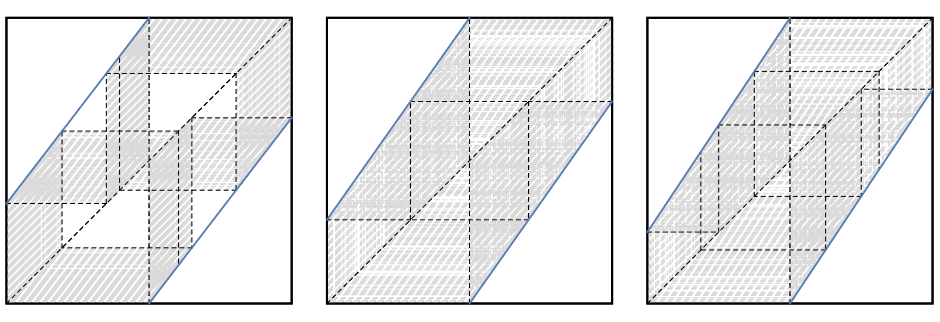}
     }
     \\
          \subfloat[A family of piecewise expanding maps where the support of the acip explodes. The middle map has a discontiniuty at a boundary point of the acip's support.\label{subfig-3:dummy}]{%
       \includegraphics[width=0.85\textwidth]{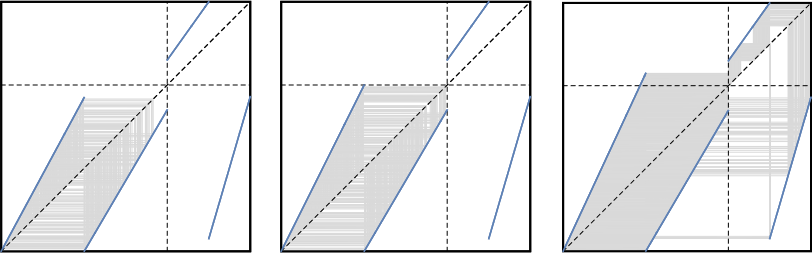}
     }
     \\
            \subfloat[A family of piecewise expanding maps where the support of the acip explodes. The middle map has a fixed point at a boundary point of the acip's support.\label{subfig-4:dummy}]{%
       \includegraphics[width=0.85\textwidth]{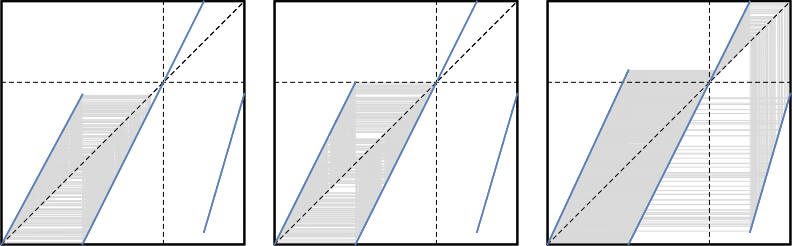}
     }
     
     \caption{Examples of families of piecewise expanding maps where the middle map does not satisfy the separation condition.}
     \label{fig}
   \end{figure}
   
See Figure~\ref{fig} for an illustration of the separation condition.
In the following we give a simpler sufficient condition that implies the separation condition.

\begin{lemma}\label{lem:sufcondition}
If there is no orbit segment starting in $ D $ and ending at a periodic point or at a point in $D$, the $f$ satisfies the separation condition.
\end{lemma}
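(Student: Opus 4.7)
The plan is to verify the four clauses (\ref{H:1})--(\ref{H:4}) of Definition~\ref{def:H} one at a time, converting each potential violation into an orbit segment that starts in $D$ and ends either at a point of $D$ or at a periodic point, which is exactly the pattern the hypothesis excludes. The two workhorses are Lemma~\ref{lem:boundary segment} (every point of $\partial A_\mu$ lies on some boundary segment of $\mu$) and Lemma~\ref{le:sing-periodic} (two distinct ergodic acips with intersecting supports share boundary segments terminating at a common point of $D$ or a common periodic point). Since, by Definition~\ref{boundary segment:def}, a boundary segment always begins at a point of $D$ and has positive length, these lemmas produce the forbidden orbit segments almost for free.

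Clause~(\ref{H:1}) is immediate: if $A_\mu\cap A_\nu\neq\emptyset$ with $\mu\neq\nu$, then Lemma~\ref{le:sing-periodic} supplies a boundary segment of $\mu$ that starts in $D$ and ends in $D$ or at a periodic point, contradicting the hypothesis.

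For clause~(\ref{H:2}), clause~(\ref{H:1}) already handles the case of different $\mu_i$'s, so only two mixing components $\Lambda_{i,j}$ and $\Lambda_{i,j'}$ of the same $\mu_i$ need to be separated. Viewing these as the supports of two distinct ergodic acips of the piecewise expanding map $F:=f^{k_i}$, I aim to apply clause~(\ref{H:1}) to $F$. This reduces to checking that $F$ inherits the hypothesis: any would-be $F$-orbit segment starting at some $y_0\in D_F$ admits some $j\in\{0,\dots,k_i-1\}$ with $f^j(y_0^\pm)\in D$, and its endpoint is either $f$-periodic ($F$-periodicity implies $f$-periodicity) or has some $f^{j'}$-iterate landing in $D$. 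Consistent use of one-sided limits then converts the $F$-orbit segment into an $f$-orbit segment of positive length starting in $D$ and ending in $D$ or at a periodic point, contradicting the hypothesis for $f$.

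For clauses~(\ref{H:3}) and~(\ref{H:4}): if $x\in D_{f^k}\cap\partial A_\mu$ with $k=\Per(\mu)$, Lemma~\ref{lem:boundary segment} places $x=x_i$ with $i\geq 1$ on a boundary segment $\{x_0,\dots,x_n\}$ of $\mu$ with $x_0\in D$; choosing $j\in\{0,\dots,k-1\}$ minimal with $f^j(x)\in D$ ensures $f^j$ is continuous at $x$, so extending the forward orbit past $x_n$ if necessary yields $x_{i+j}=f^j(x)\in D$, producing a forbidden orbit segment $\{x_0,\dots,x_{i+j}\}$ of length $i+j\geq 1$. For clause~(\ref{H:4}), if a periodic $x\in(0,1)\cap\partial A_\mu$ satisfies $x\notin D$ the same argument truncated at $x_i=x$ yields a forbidden orbit segment ending at a periodic point, while if $x\in D$ a full period of $x$ is itself a forbidden orbit segment starting and ending in $D$. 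The main obstacle, as flagged in clause~(\ref{H:2}), is the careful bookkeeping when unrolling $F$-orbits into $f$-orbits with matching one-sided limits; everywhere else the forbidden orbit segment falls out of the boundary-segment structure with no further input.
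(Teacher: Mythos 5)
Your proof is correct and follows essentially the same route as the paper: clauses (\ref{H:1}) and (\ref{H:2}) via Lemma~\ref{le:sing-periodic} (the paper applies it to the acips of $f^{k_i k_{i'}}$ for all pairs, whereas you reduce the cross-measure case to clause (\ref{H:1}) and iterate only $f^{k_i}$), and clauses (\ref{H:3}) and (\ref{H:4}) via Lemma~\ref{lem:boundary segment}. Your extra bookkeeping for transferring the hypothesis to the iterated map and for matching one-sided limits when extending orbit segments is sound and makes explicit what the paper leaves implicit.
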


\begin{proof}
Condition \eqref{H:1} follows directly from Lemma~\ref{le:sing-periodic}. To prove \eqref{H:2}, apply Lemma~\ref{le:sing-periodic} to the ergodic acip's $f_*^j\nu_i$ and $f_*^{j'}\nu_{i'}$ of $f^{k_{i}k_{i'}} $ where $k_i$ and $k_{i'}$ are the mixing periods of $\nu_i$ and $\nu_{i'}$, respectively. By Lemma~\ref{lem:boundary segment}, any point in $\partial A_\mu$ belongs to an orbit segment starting in $D$. So, condition \eqref{H:3} follows from the fact that no orbit segment can start and end in $D$, and condition (4) follows from the fact that no point in $D$ can be pre-periodic. Thus $f$ satisfies the separation condition.
\end{proof}

%


Next, we show that the separation condition is generic in the space of piecewise expanding maps. 

\begin{proposition}\label{prop:generic}
The set of piecewise expanding maps $f\in \XX_m$ satisfying the separation condition is residual in $\XX_m$.
\end{proposition}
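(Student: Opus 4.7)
The plan is to invoke Lemma~\ref{lem:sufcondition} and prove the stronger statement that the set
\[
\Gscr := \{ f \in \XX_m : \text{no orbit segment starting in } D_f \text{ ends at a point of } D_f \text{ or at a periodic point} \}
\]
is residual in $\XX_m$; by Lemma~\ref{lem:sufcondition} the set of maps satisfying the separation condition then inherits residuality. To prove $\Gscr$ is residual, I would write its complement as the countable union, over $i \in \{1,\ldots,m-1\}$, $\epsilon \in \{+,-\}$, $n \ge 1$, $j \in \{1,\ldots,m-1\}$ and $p \ge 1$, of the sets
\[
E^\epsilon_{i,n,j} := \{ f : f^n(a_i(f)^\epsilon) = a_j(f) \}, \qquad P^\epsilon_{i,n,p} := \{ f : f^{n+p}(a_i(f)^\epsilon) = f^n(a_i(f)^\epsilon) \},
\]
and show each is closed with empty interior.

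Closedness follows from the continuity of the evaluation $f \mapsto f^n(a_i(f)^\epsilon)$ on $(\XX_m,d)$, which I would verify by induction on $n$: the metric $d$ controls the partition points through $\rho$ and the behaviour of $f$ on each branch after affine reparametrisation through $\rho_0$ and $\rho_{\text{Lip}}$, so one-sided limits at partition points converge under $d$-convergence. For emptiness of interior, given $f$ in such a set, I would perturb $f$ locally in a small neighbourhood $U$ of a single regular point $z \in (0,1)\setminus D_f$. A $d$-small $C^1$ perturbation supported in $U$ can be chosen to preserve the expansion constant $\sigma$, the Lipschitz bound on $f'$, and the partition $\PP_f$, while altering $f^k(a_i^\epsilon)$ only for indices $k$ such that the orbit visits $U$ at time $k-1$. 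Choosing $z = f^{n-1}(a_i^\epsilon)$ (respectively $z = f^{n+p-1}(a_i^\epsilon)$) and $U$ small enough to be disjoint from the earlier orbit points shifts $f^n(a_i^\epsilon)$ (respectively $f^{n+p}(a_i^\epsilon)$) by any prescribed small amount, without disturbing the targets $a_j$ or $f^n(a_i^\epsilon)$, and hence exits the bad set.

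The main obstacle is the case in which the orbit revisits itself before the terminal step, so no such isolated regular $z$ exists. Here I would instead perturb the target. For $E^\epsilon_{i,n,j}$, a slight shift of the partition point $a_j$ is $d$-small and affects $f$ only in a neighbourhood of $a_j$ that can be chosen disjoint from the finitely many orbit points up to step $n-1$; this moves $a_j$ while leaving $f^n(a_i^\epsilon)$ fixed. For $P^\epsilon_{i,n,p}$, if the orbit becomes periodic before step $n+p$, the equation $f^{n+p}(a_i^\epsilon) = f^n(a_i^\epsilon)$ becomes a finite-dimensional algebraic relation among the slopes and values of $f$ on the branches visited by the cycle; a generic small perturbation of $f'$ on any one such branch breaks this coincidence while keeping the map in $\XX_m$. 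Consequently each $E^\epsilon_{i,n,j}$ and $P^\epsilon_{i,n,p}$ is closed with empty interior, their countable union $\XX_m\setminus \Gscr$ is meager, and since $(\XX_m,d)$ is a complete metric space, $\Gscr$ is residual, proving the proposition.
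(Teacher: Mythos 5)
Your decomposition is essentially the paper's own: the authors likewise reduce to Lemma~\ref{lem:sufcondition} and write the complement as a countable union of sets indexed by a discontinuity point, a one-sided limit, and the relevant iteration times (their $\ZZ_p$ are your $E^\epsilon_{i,n,j}$ and their $\YY_{p,n,k}$ are your $P^\epsilon_{i,n,p}$), asserting that each is closed with empty interior. The paper offers no justification at all for that assertion, so your attempt to supply one is welcome; but one of your stated justifications is literally false. The evaluation $f\mapsto f^n(a_i(f)^\epsilon)$ is \emph{not} continuous on all of $(\XX_m,d)$ for $n\geq 2$: if the orbit of $a_i(f)^\epsilon$ lands exactly on a partition point $a_l(f)$ at some intermediate time $k<n$, an arbitrarily small perturbation can push the orbit to the other side of $a_l$, so the value $g^n(a_i(g)^\epsilon)$ jumps. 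Consequently the sets $E^\epsilon_{i,n,j}$ and $P^\epsilon_{i,n,p}$ as you define them need not be closed. The standard repair, which also resolves the ``orbit revisits itself'' difficulty you flag for the empty-interior step, is to replace each set by its ``first coincidence'' version: require in addition that $f^k(a_i(f)^\epsilon)\notin D_f$ for $0<k<n$ and that the points $x_1,\ldots,x_{n-1}$ (resp.\ $x_1,\ldots,x_{n+p-1}$) are pairwise distinct. Any map violating the hypothesis of Lemma~\ref{lem:sufcondition} lies in one of these refined sets (take minimal $n$, then minimal $p$), each refined set is contained in the closure of nothing worse than itself union lower-order bad sets, and on each one the evaluation \emph{is} continuous and a perturbation supported near the single regular, non-recurrent point $x_{n-1}$ (resp.\ $x_{n+p-1}$) moves only the terminal value, so emptiness of interior follows exactly as in your first case and the case analysis in your last paragraph becomes unnecessary. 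With that adjustment the argument is sound and coincides with the paper's.
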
 

\begin{proof}

Given integers $p,n\in\Nn$ and $k\geq0$ let $\YY_{p,n,k}$ be the set of maps $f\in \XX_m$ such that there exists $x\in D_f$ with $f^{n+k}(z)=f^k(z)$ and $z:=f^p(x^\pm)$. 
Similarly, given $p\in\Nn$ let $\ZZ_{p}$ be the set of maps $f\in\XX_m$ such that there exist $x\in D_f$ and $z\in D_f$ with $f^p(x^\pm)=z$. 

The sets $\YY_{p,n,k}$ and $\ZZ_{p}$ are closed with empty interior. Hence, their union $\WW_m$ over all integers is a meagre set. Clearly, any $f\in\XX_m\setminus \WW_m$ satisfies the hypothesis of Lemma~\ref{lem:sufcondition}. Hence, the set of maps $f\in \XX_m$ satisfying the separation condition is residual.
\end{proof}

\subsection{Saturation}

Let $I\subset [0,1]$ be any open subinterval and consider the open sets $\omega_n(I)$ and $\Omega_n(I)$ defined recursively,

$$
\omega_{n+1}(I)=f(\omega_n(I)\setminus D),\quad \omega_0(I)=I,
$$
and
$$
\Omega_{n}(I)= \omega_0(I)\cup\cdots\cup\omega_{n}(I).
$$
Also define 
\begin{equation}\label{omegaI}
\Omega(I):=\bigcup_{n=0}^\infty\omega_n(I).
\end{equation}


\begin{lemma}\label{le:Omegan}
There exist $\delta>0$ and $N>0$ such that for any $n\geq N$, every connected component of $\Omega_n(I)$ has length $\geq \delta$. 
\end{lemma}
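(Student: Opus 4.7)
The strategy is to use the piecewise expansion of $f$ to argue that the ascending family $\Omega_n(I)$ stabilizes after finitely many iterates into a finite disjoint union of open intervals, each of length bounded below by a uniform $\delta > 0$. Setting $\Omega(I) = \bigcup_{n \ge 0} \omega_n(I)$, the relation $\omega_{n+1}(I) = f(\omega_n(I) \setminus D)$ immediately gives the essential forward invariance $f(\Omega(I) \setminus D) \subseteq \Omega(I)$, which is the structural hypothesis driving everything else.

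Next, I would show that $\Omega(I)$ is a finite disjoint union of open intervals $U_1, \ldots, U_r$ satisfying $\min_j |U_j| \ge \delta$ for some $\delta > 0$. The key quantitative inputs are the expansion $|f'| \ge \sigma > 1$ and a bounded distortion estimate on continuity branches of $f^n$ that follows from the Lipschitz regularity of $f'$. Essential forward invariance prevents any component $U$ of $\Omega(I)$ from being arbitrarily short: iterating $U$ under $f$ produces, between discontinuities, images contained in $\Omega(I)$ of length at least $\sigma^n |U|$; either $U$ is absorbed into a longer component (contradicting its maximality as a component) or else $U$ must intersect the finite sets $D_{f^k}$ in a pattern that stabilizes in bounded time, again yielding a uniform lower bound on $|U|$.

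Finally, I would argue that some finite $N$ satisfies $\Omega_N(I) = \Omega(I)$, after which essential forward invariance gives $\Omega_n(I) = \Omega(I) $ for every $n \ge N$. The endpoints of $\omega_n(I)$ are forward iterates of $\partial I \cup D$ (with one-sided limits at the discontinuities), and the finiteness of the component structure of $\Omega(I)$ forces only finitely many of these iterates to contribute boundary points of $\Omega_n(I)$ that persist in the limit; the rest fall inside the interior of $\Omega(I)$. Taking $\delta := \min_j |U_j|$ then gives the result for $n \ge N$.

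The main obstacle is the second step: proving that $\Omega(I)$ has only finitely many components with a uniform positive length. A priori, the forward orbits of $\partial I \cup D$ can be dense in parts of $[0,1]$, threatening to generate infinitely many thin components of $\Omega(I)$; ruling this out requires a careful combined use of the expansion $\sigma$, the bounded distortion constant, and the invariance $f(\Omega(I)\setminus D)\subseteq \Omega(I)$ to show that any sufficiently short putative component would be forced, within a controlled number of iterations, to merge with a neighbouring one.
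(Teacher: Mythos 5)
There is a genuine gap, and in fact the logical order of your argument is reversed relative to what the lemma can support. You try to first establish that the limit set $\Omega(I)$ is a finite union of intervals of length $\ge\delta$ and then transfer this to $\Omega_n(I)$ via the claim that $\Omega_N(I)=\Omega(I)$ for some finite $N$. But in the paper the finiteness of the components of $\Omega(I)$ (Lemma~\ref{le:Omega}) is a \emph{consequence} of the present lemma, not an input to it; and the stabilization $\Omega_N(I)=\Omega(I)$ is neither proved by you nor needed. Your sketch of why it holds (``only finitely many of these iterates contribute boundary points that persist in the limit'') is not an argument: a priori $\Omega_n(I)$ could consist of many short components slowly filling up a single long component of $\Omega(I)$, in which case knowing the component structure of the limit tells you nothing about a lower bound for the components of $\Omega_n(I)$ at finite $n$ --- which is exactly what the lemma asserts. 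Similarly, in your second step the two alternatives you offer for a short component $U$ (``absorbed into a longer component, contradicting maximality'' and ``intersects $D_{f^k}$ in a pattern that stabilizes in bounded time'') are assertions, not proofs; the first is not a contradiction at all, since $f^n(U)$ landing inside a longer component of $\Omega(I)$ says nothing about $U$ itself.

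The missing idea is much simpler and purely combinatorial: since $D$ is finite and the sets $\Omega_n(I)$ increase, the finite set $D\cap\Omega_n(I)$ stabilizes at some stage $n_0$, i.e.\ $D\cap\Omega(I)=D\cap\Omega_{n_0}(I)$. Let $\delta(n)$ be the minimal length of a connected component of $\Omega_n(I)\setminus D$. For $n\ge n_0$, every component $C$ of $\Omega_{n+1}(I)\setminus D$ either contains a component of $\Omega_n(I)\setminus D$ (so $|C|\ge\delta(n)$), or is disjoint from $\Omega_n(I)$, in which case $C$ is the $f$-image of a union of components of $\Omega_n(I)\setminus D$ and hence, by the expansion $|f'|\ge\sigma>1$, satisfies $|C|\ge\sigma\,\delta(n)\ge\delta(n)$. (No new cuts by $D$ can appear because $D\cap\Omega_n(I)$ has already stabilized.) Thus $\delta(n)\ge\delta(n_0)>0$ for all $n\ge n_0$, which gives the lemma with $N=n_0$ and $\delta=\delta(n_0)$. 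Bounded distortion plays no role, and no statement about the limit set $\Omega(I)$ is required.
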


\begin{proof}
Let $ \delta(n) >0$ be the minimum length of the connected components of $ \Omega_{n}(I) \setminus  D $. Since $ D$ is finite and the sequence of open sets  $ \Omega_{n}(I) $ is increasing, $   D \cap \Omega(I) =   D \cap \Omega_{n_{0}}(I) $ for some $ n_0 \ge 0 $.  Let us show by induction that $ \delta(n) \ge \delta(n_{0}) $ for all $ n \ge n_{0} $. The statement is clearly true for $ n=n_{0} $. Suppose that the inequality is true for a given $ n \ge n_{0} $. Since $ \Omega_{n}(I) \subset \Omega_{n+1}(I) $, for each connected component $ C $ of $ \Omega_{n+1} \setminus   D $, either $ C $ contains one connected component of $ \Omega_{n}(I) \setminus   D $, or $ C $ does not intersect  $ \Omega_{n}(I) $, and in this case, it is equal to the image of the union of finitely many connected components of $ \Omega_{n}(I) \setminus   D $. In either case, the length of $ C $ is greater than or equal to $ \delta(n_{0}) $. The proof is complete.
\end{proof}

\begin{lemma}\label{le:Omega}
$\Omega(I)$ is a finite union of intervals.
\end{lemma}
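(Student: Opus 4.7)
The plan is to combine Lemma~\ref{le:Omegan} with the observation that $\Omega(I)$ is an open subset of $[0,1]$, so its connected components are pairwise disjoint open intervals; finiteness will then follow from a uniform positive lower bound on their lengths.

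First, I would note that each $\omega_n(I)$ is open: inductively, $\omega_n(I)\setminus D$ is open, and since $f$ restricted to $\sint(I_i)$ is a homeomorphism onto an open set (it is $C^1$ with $|f'|\geq\sigma>0$), the image $f(\omega_n(I)\setminus D)$ is a finite union of open intervals. Hence $\Omega_n(I)$ and $\Omega(I)$ are open subsets of $\Rr$, so $\Omega(I)$ decomposes as an at most countable disjoint union $\Omega(I)=\bigsqcup_{j} C_{j}$ of open intervals (its connected components).

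Next, I would show that each $C_j$ has length at least the constant $\delta>0$ provided by Lemma~\ref{le:Omegan}. Fix a component $C$ of $\Omega(I)$ and pick $x\in C$. By definition of $\Omega(I)=\bigcup_n\Omega_n(I)$, there exists $n\geq N$ such that $x\in\Omega_n(I)$, where $N$ is the integer from Lemma~\ref{le:Omegan}. Let $C^{(n)}$ denote the connected component of $\Omega_n(I)$ containing $x$. Then $C^{(n)}$ is a connected subset of $\Omega(I)$ containing $x$, so $C^{(n)}\subseteq C$; by Lemma~\ref{le:Omegan}, $|C^{(n)}|\geq\delta$, hence $|C|\geq\delta$.

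Finally, since the components $\{C_j\}$ are pairwise disjoint subsets of $[0,1]$,
\[
\sum_{j}|C_{j}|\leq 1,
\]
which together with $|C_j|\geq\delta$ forces the number of components to be at most $1/\delta$. Therefore $\Omega(I)$ is a finite union of (open) intervals.

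I do not anticipate any real obstacle: the only subtle point is that $\Omega_n(I)$ is increasing in $n$ (so a component of $\Omega_n(I)$ cannot be cut into pieces inside $\Omega(I)$), which is what lets the uniform lower bound $\delta$ from Lemma~\ref{le:Omegan} transfer from components of $\Omega_n(I)$ to components of the limit $\Omega(I)$.
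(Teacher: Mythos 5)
Your proof is correct and takes essentially the same route as the paper: both deduce from Lemma~\ref{le:Omegan} a uniform lower bound $\delta$ on the length of the components of $\Omega(I)$ and conclude there can be at most $1/\delta$ of them. You simply spell out the step the paper leaves implicit, namely that each component of $\Omega(I)$ contains a full component of some $\Omega_n(I)$ with $n\geq N$.
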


\begin{proof}
Clearly, $\Omega_n(I)$ is a finite union of intervals. By Lemma~\ref{le:Omegan}, the set $\Omega_n(I)$ has a lower bound on the size of the connected components for every $n$ sufficiently large.  Hence, this implies a similar lower bound on the size of the connected components of $\Omega(I)$, thus proving the lemma.
\end{proof}

\begin{lemma}\label{le:Omeganexact}
If $(f,\mu)$ is ergodic, then $A_\mu\setminus \Omega(I)$ is a finite set for every open interval $I\subset A_\mu$.
\end{lemma}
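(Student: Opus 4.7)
The plan is to prove that $\Omega(I)$ has full $\mu$-measure in $A_\mu$ by combining its near-invariance under $f$ with the ergodicity of $\mu$, and then upgrade $A_\mu\setminus\Omega(I)$ from a $\mu$-null set to a finite set using the topological structure of both $A_\mu$ and $\Omega(I)$.

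The first step is to show that $\Omega(I)$ is $f^{-1}$-invariant modulo $\mu$. The recursion $\omega_{n+1}(I)=f(\omega_n(I)\setminus D)$ yields directly
\[
\Omega(I)\setminus D\;\subseteq\; f^{-1}(\Omega(I)).
\]
Since $D$ is finite and $\mu\ll\Leb$, we have $\mu(D)=0$, so $\mu(\Omega(I))\le\mu(f^{-1}(\Omega(I)))$. Combining this with the $f$-invariance $\mu(f^{-1}(\Omega(I)))=\mu(\Omega(I))$ forces both inclusions to be equalities mod $\mu$, so $\Omega(I)=f^{-1}(\Omega(I))$ modulo a $\mu$-null set. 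Because $I$ is an open subset of $A_\mu=\supp\mu$, we have $\mu(\Omega(I))\ge\mu(I)>0$, and the ergodicity of $(f,\mu)$ then gives $\mu(\Omega(I))=1$. Consequently $\mu(A_\mu\setminus\Omega(I))=0$.

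To pass from $\mu$-null to finite, I would invoke the structural results already proved: $A_\mu$ is a finite union of closed intervals by Theorem~\ref{th:acip}(3), while $\Omega(I)$ is an open finite union of intervals by Lemma~\ref{le:Omega}. Hence $A_\mu\setminus\Omega(I)$ is a finite union of closed intervals in $\Rr$. If one such component $J$ were non-degenerate, its interior would be a non-empty open subset of $A_\mu=\supp\mu$, forcing $\mu(J)>0$ by definition of the support; this would contradict $\mu(A_\mu\setminus\Omega(I))=0$. Therefore every component is a single point, and $A_\mu\setminus\Omega(I)$ is finite.

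The step I expect to require the most care is the modular invariance of $\Omega(I)$: one must verify that the exceptional set lost at each iterate stays $\mu$-null so that the one-sided inclusion, together with invariance of $\mu$, upgrades to equality mod $\mu$. This is exactly where absolute continuity of $\mu$ plays the decisive role. Everything else reduces to the already established finite-union-of-intervals structure of $A_\mu$ and $\Omega(I)$ and the defining property of the support.
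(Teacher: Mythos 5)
Your proof is correct and takes essentially the same route as the paper: the paper's own proof simply asserts that ergodicity gives $\Omega(I)=A_\mu \pmod{0}$ and that the finite-union-of-intervals structure from Lemma~\ref{le:Omega} then yields finiteness. Your argument is a careful filling-in of exactly those two steps (the mod-$0$ invariance of $\Omega(I)$ under $f^{-1}$, positivity of $\mu(I)$ from the definition of support, and the reduction of the closed-interval components of $A_\mu\setminus\Omega(I)$ to points), with no gaps.
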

\begin{proof}


By ergodicity, $\Omega(I)=A_\mu\pmod{0}$. Since $\Omega(I)$ is also a finite union of intervals (see Lemma~\ref{le:Omega}) the claim follows.  
\end{proof}

\subsection{Periodic orbits}

Recall that $ \Sone = \bigcup^{m}_{j} I_{j} $.
Define $ \ell(f) = \min_{j} |I_{j}| $, where $ |I_{j}| $ denotes the length of $ I_{j} $.

\begin{lemma}
\label{le:gr}
Suppose $ f $ has least expansion coefficient $ \sigma>2 $. Then for every interval $ I \subset \Sone $, there exist $ i \geq 1 $ and an open subinterval $ W \subset I $ such that 
\begin{enumerate}
\item $ f^{i}\vert_W:W\to\sint(I_{j}) $ is a diffeomorphism, for some $ 1 \le j \le m $,
\item $ f^{i+1}(W) $ is an open interval and $ | f^{i+1}(W)| \ge \sigma \ell(f) $.
\end{enumerate}
\end{lemma}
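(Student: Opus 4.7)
The plan is to iteratively pull back subintervals under $f$, exploiting the fact that $\sigma > 2$ produces net expansion even after one discontinuity-induced split. Since $\{I_1,\ldots,I_m\}$ covers $\Sone$, the interval $I$ contains a subinterval $J_0 \subseteq I_{j_0}$ with $|J_0| > 0$ for some $j_0$. Setting $W_0 := J_0$, I proceed by induction, maintaining the invariant that $f^n|_{W_n} : W_n \to J_n$ is a diffeomorphism onto $J_n \subseteq I_{j_n}$ with $|J_n| \ge (\sigma/2)^n |J_0|$.

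The induction step rests on a dichotomy for the open interval $f(J_n)$. Either (a) $f(J_n) \supseteq \sint(I_j)$ for some $j$, in which case I stop; or (b) $f(J_n)$ contains at most one point of $D$. The key observation behind (b) is that two consecutive discontinuities $a_j, a_{j+1}$ lying inside $f(J_n)$ would force $\sint(I_{j+1}) = (a_j,a_{j+1}) \subseteq f(J_n)$, putting us back in case (a). In case (b), split $f(J_n)$ at the (at most one) discontinuity and let $J_{n+1}$ be the longer piece, so $J_{n+1} \subseteq I_{j_{n+1}}$ for some $j_{n+1}$, with $|J_{n+1}| \ge |f(J_n)|/2 \ge \sigma|J_n|/2 \ge (\sigma/2)^{n+1}|J_0|$. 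The pullback $W_{n+1} := (f^{n+1}|_{W_n})^{-1}(J_{n+1}) \subseteq W_n$ preserves the invariant, using that $f|_{I_{j_n}}$ is injective and $C^1$, so $f^{n+1}|_{W_{n+1}}$ is a diffeomorphism.

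Since $\sigma/2 > 1$ while $|J_n| \le 1$, case (b) can occur only finitely often; hence at some step I land in case (a), terminating the induction with $i := n+1$. Defining $W := (f^{i}|_{W_n})^{-1}(\sint(I_j))$ then yields property (1). For property (2), $f^{i+1}(W) = f(\sint(I_j))$ is an open interval, being the image of an open interval under $f|_{I_j}$, which is a diffeomorphism onto its image since $|f'| \ge \sigma > 0$ and $f$ is $C^1$ on $I_j$; moreover $|f^{i+1}(W)| = \int_{\sint(I_j)} |f'|\,dx \ge \sigma|I_j| \ge \sigma\ell(f)$. The only real obstacle is recognizing the combinatorial dichotomy that confines the split to at most two pieces; once that is in hand, the hypothesis $\sigma > 2$ immediately turns each iteration into a net expansion and forces termination.
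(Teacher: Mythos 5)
Your proof is correct and follows essentially the same route as the paper's: both exploit that cutting at a discontinuity at most halves the longest piece while $f$ expands lengths by $\sigma>2$, so the longest piece grows like $(\sigma/2)^n$ and must eventually cover some $\sint(I_j)$. The only difference is presentational --- you run a direct greedy induction on the longer piece, whereas the paper argues by contradiction that $f^{i}(I)$ would otherwise consist of at most $2^{i}$ intervals, one of length at least $(\sigma/2)^{i}|I| \to \infty$.
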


\begin{proof}
Part~(1). Let $ B = \bigcup^{m}_{j=1} \partial I_{j} $. We claim that given any interval $ I \subset \Sone $, there exist $ i \in \Nn $ and a subinterval $ W = (a,b) \subset I $ with $ a,b \in f^{-i}(B) $ such that 
\[ 
W \cap f^{-k}(B) = \emptyset \qquad \text{for } 0 \le k \le i.
\]
Indeed, if this was not the case, then for every $ i \geq 1$, no two consecutive points of $ I \cap (B \cup f^{-1}(B) \cup \cdots \cup f^{-i}(B)) $ would belong to $ f^{-i}(B) $. It is not difficult to see that this would imply that $ f^{i}(I) $ consists of at most $ 2^{i} $ intervals. But $ \sigma > 2 $, and so the length of one of these intervals would be not less than $ (\sigma/2)^{i} \to +\infty $, as $ i \to +\infty $, giving a contradiction. By the definition of $ B $, we have $ f^{i}(W) = \sint I_{j} $ for some $ j $. 

Part~(2). From Part~(1), it follows that $ f^{i+1}(W) = f(\sint I_{j}) $ is an open interval, and so
\[ 
|f^{i+1}(W)| = |f(I_{j})| \ge \sigma |I_{j}| \ge \sigma \ell(f).
\]
\end{proof}

%

Let $\mu$ be an ergodic acip of $f$. In the next theorem we prove, using elementary methods, that the periodic points of $ f $ are dense in the support of $\mu$. 

\begin{theorem}
\label{pr:density}
The periodic points of $ f $ are dense in $A_\mu$. 
\end{theorem}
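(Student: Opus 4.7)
The plan is to reduce to the case where $(f,\mu)$ is exact on $A_\mu$ with expansion coefficient $\sigma > 2$, and then, given any open interval $U \subseteq A_\mu$, to produce a subinterval $W \subseteq U$ and an integer $N \geq 1$ such that $f^N|_W$ is a diffeomorphism onto an open interval strictly containing $\bar{W}$. The intermediate value theorem applied to $f^N(x) - x$ on $W$ will then produce a fixed point of $f^N$ in $W$, giving a periodic point of $f$ in $U$.

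First I would reduce to these hypotheses. By Theorem~\ref{th:acip}(2), one has $k := \Per(\mu)$ and $\nu$ with $(f^k, \nu)$ exact on $\Lambda := \supp \nu \subseteq A_\mu$; since $A_\mu$ is the union of the mixing components $f_*^j \Lambda$, it suffices to establish density in each, and this follows from the same argument applied to $(f^k, f_*^j \nu)$. Passing to $f^{kK}$ with $K$ chosen so large that $\sigma^{kK} > 2$, and relabelling accordingly, the new $f$ is piecewise expanding with expansion $> 2$ and $(f, \mu)$ is exact on $A_\mu$. I then shrink $U$ so that $\bar{U} \subseteq \sint(I_{j^*})$ for some partition piece $I_{j^*}$ of $f$ with $I_{j^*} \subseteq A_\mu$.

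Next, Lemma~\ref{le:gr} applied to $U$ gives $W_0 \subseteq U$ and $i_0 \geq 1$ with $f^{i_0}|_{W_0} \colon W_0 \to \sint(I_{j_0})$ a diffeomorphism, where $I_{j_0} \subseteq A_\mu$ since $f$-iterates of $W_0 \subseteq A_\mu$ remain in $A_\mu$. Iterating Lemma~\ref{le:gr} and pulling back through the resulting chain of full diffeomorphic branches $\sint(I_{j_0}) \to \sint(I_{j_1}) \to \cdots \to \sint(I_{j_r}) = \sint(I_{j^*})$, I produce $W \subseteq W_0 \subseteq U$ and $N \geq 1$ with $f^N|_W \colon W \to \sint(I_{j^*})$ a diffeomorphism. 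Since $\bar{W} \subseteq \bar{U} \subsetneq \sint(I_{j^*}) = f^N(W)$, the one-sided limits of $f^N(x) - x$ at the endpoints of $W$ have strictly opposite signs, so IVT yields $p \in W$ with $f^N(p) = p$, giving the desired periodic point in $U$.

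The main obstacle is arranging this chain to terminate at the prescribed piece $\sint(I_{j^*})$, equivalently, establishing strong connectivity of the directed graph on partition pieces of $A_\mu$ whose edges are the full diffeomorphic branches provided by Lemma~\ref{le:gr}. A naive chain cycles by pigeonhole but may avoid $\sint(I_{j^*})$. My strategy is to combine Lemma~\ref{le:gr} with the saturation from Lemma~\ref{le:Omeganexact}: for any piece $\sint(I_a) \subseteq A_\mu$, its iterates saturate $A_\mu$ up to a finite set, so for any target $\sint(I_b)$ there exist $m$ and a component $V$ of $\sint(I_a) \setminus D_{f^m}$ with $f^m(V) \cap \sint(I_b)$ a non-empty open sub-interval; applying Lemma~\ref{le:gr} to this intersection and pulling back provides the needed transitions, and combining them with exactness of $(f,\mu)$ to preclude any proper forward-invariant sub-collection of pieces should establish strong connectivity and yield a chain reaching $\sint(I_{j^*})$.
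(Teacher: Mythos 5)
Your overall mechanism --- produce an open $W\subset U$ and $N\geq 1$ with $f^{N}|_{W}$ an expanding diffeomorphism onto an interval containing $\overline{W}$, then apply the intermediate value theorem to $f^N(x)-x$ --- is exactly the paper's, as is the preliminary passage to an iterate with expansion $>2$. The gap is precisely where you flag it, and your proposed repair does not close it. To terminate your chain at the prescribed piece $\sint(I_{j^*})$ you need a full diffeomorphic branch onto a piece \emph{chosen in advance}, but every transition you can actually construct points the wrong way: Lemma~\ref{le:gr}, applied to any interval (including the subinterval $f^m(V)\cap\sint(I_{j^*})$ supplied by saturation), returns a full branch onto \emph{some} piece that the lemma selects, never onto a designated target. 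Hence the set $S$ of pieces reachable from $I_{j_0}$ by full branches is closed under all the transitions you build, but nothing forces $I_{j^*}\in S$. Knowing from Lemma~\ref{le:Omeganexact} that forward images of a piece cover $\sint(I_{j^*})$ up to a finite set does not produce a single branch mapping \emph{onto} $\sint(I_{j^*})$: the piece may be covered only by several overlapping partial branch images, none of them full. Likewise, ``exactness precludes a proper forward-invariant sub-collection of pieces'' is a statement about images, not about full branches, so it does not yield strong connectivity of your graph. (A secondary unjustified step: the existence of a partition piece $I_{j^*}$ of the iterated map with $\overline{U}\subset\sint(I_{j^*})\subset A_\mu$; such a piece need not exist, which is why the paper works with the components of $\sint(A_\mu\setminus D_{f^k})$ instead.)

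The paper sidesteps the targeting problem by reversing the roles of large and small intervals: rather than pushing $W$ forward onto a prescribed piece, it pulls the small interval $U$ \emph{backwards} into \emph{every} component $I$ of $\sint(A_\mu\setminus D_{f^k})$. By Lemma~\ref{le:Omeganexact}, after shrinking $U$ one may assume $U\subset\omega_{n_I}(I)$ with $U$ lying inside a single branch image (shrinking $U$ is harmless, whereas you cannot shrink the target piece $\sint(I_{j^*})$), which yields a diffeomorphism $f^{n_I}|_{I'}\colon I'\to U$ with $I'\subset I$. Since such a return branch exists for every component, it no longer matters which component $I$ the forward branch $f^{ik}|_{W}\colon W\to\sint(I)$ from Lemma~\ref{le:gr} happens to land on: composing gives $f^{ik+n_I}|_{W'}\colon W'\to U$ with $W'\subset W\subset U$, and the expanding fixed point argument concludes. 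If you replace your ``chain to $I_{j^*}$'' with this pull-back of $U$, the rest of your write-up goes through.
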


\begin{proof}

To obtain the wanted conclusion, we show that every open interval $ U \subset A_\mu $ contains a periodic point of $ f $.
 Let $k\in\Nn$ such that $f^k$ has least expansion coefficient $>2$. 
Also, let $\mathcal{I}_\mu$ be the collection of the connected components of $\sint(A_\mu\setminus   D_{f^k})$. 

By Lemma~\ref{le:Omeganexact}, we can assume that $U\subset \Omega_N(I)$ for some large enough integer $N$
and all $I\in \mathcal{I}_\mu$. Then reducing the open interval  $U$ even further we can
assume that for every $I\in \mathcal{I}_\mu$ there exists a positive integer $n_I\leq N$
such that $U\subset \omega_{n_I}(I)$.
We conclude that $f^{n_I}\vert_{I'} \colon I'\to U$ is a diffeomorphism for some open subinterval $I'\subset I$.

By Lemma~\ref{le:gr}, there exists an open subinterval $ W \subset U $ and $i\geq 1$ such that $ g:=f^{ik}\vert_W:W\to I $ is a diffeomorphism for some $I\in\mathcal{I}_\mu$.

Let $W':=g^{-1}(I')$. Then $f^{ik+n_I}\vert_{W'}\colon W'\to U$ is a diffeomorphism and $W'\subset W\subset  U$. Since $f^{ik+n_I}$ is expanding, it has a fixed point inside $U$. This proves the theorem. 
\end{proof}

In the next result we give a characterization for $(f,\mu)$ to be exact in terms of periodic orbits. 

\begin{proposition}\label{prop:exact}
$(f,\mu)$ is exact if and only if any open set in $A_\mu$ contains two periodic points having coprime periods.
\end{proposition}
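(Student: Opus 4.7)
The plan is to handle the two implications separately, making essential use of the cyclic decomposition from Theorem~\ref{th:acip} and the branch/expansion techniques from the proof of Theorem~\ref{pr:density}.

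\emph{Necessity.} Assume $(f,\mu)$ is exact and fix an open subinterval $U\subset A_\mu$. Define
\[
N(U):=\{n\ge1:\exists\text{ open }W\subset U\text{ with }f^n|_W\colon W\to U\text{ a diffeomorphism}\}.
\]
Three facts are needed. \emph{(i)} $N(U)\ne\emptyset$: the construction in the proof of Theorem~\ref{pr:density} produces $W'\subset U$ and $N=iK+n_I$ with $f^N|_{W'}\colon W'\to U$ a diffeomorphism. \emph{(ii)} $N(U)$ is closed under addition: given $n_1,n_2\in N(U)$ with branches $W_1,W_2\subset U$, the set $W:=(f^{n_1}|_{W_1})^{-1}(W_2)\subset W_1\subset U$ satisfies $f^{n_1+n_2}|_W\colon W\to U$ is a diffeomorphism. \emph{(iii)} Crucially, $\gcd N(U)=1$: if $d:=\gcd N(U)>1$, set $E_j:=\bigcup_{n\equiv j\,(\bmod\,d)}\omega_n(U)$ for $j=0,\ldots,d-1$; then $f(E_j\setminus D)\subset E_{(j+1)\bmod d}$ and $\bigcup_j E_j=\Omega(U)$ equals $A_\mu$ up to a finite set by Lemma~\ref{le:Omeganexact}. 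If $y\in E_i\cap E_j$ with $i\not\equiv j\pmod d$, two orbit segments from $U$ reach $y$ with lengths incongruent modulo $d$; applying the branch construction of Theorem~\ref{pr:density} to a small open set around $y$ yields a return path back to $U$, and concatenating this path with each of the two original segments produces elements of $N(U)$ whose difference is not divisible by $d$, a contradiction. Hence $\{E_0,\ldots,E_{d-1}\}$ is a nontrivial cyclic $f$-decomposition of $A_\mu$, contradicting $\Per(\mu)=1$. With $\gcd N(U)=1$ and $N(U)$ closed under addition, the Sylvester--Frobenius theorem gives $N(U)\supset\{n\ge N_0\}$ for some $N_0$, so $N_0,N_0+1\in N(U)$. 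For $i\in\{0,1\}$ pick $W_i\subset U$ with $f^{N_0+i}|_{W_i}\colon W_i\to U$ a diffeomorphism; by expansion, the inverse $(f^{N_0+i}|_{W_i})^{-1}\colon U\to W_i\subset U$ is a contraction and hence has a unique fixed point $p_i\in W_i\subset U$, which is $f$-periodic with period dividing $N_0+i$. Since $\gcd(N_0,N_0+1)=1$, the $f$-periods of $p_0,p_1\in U$ are coprime.

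\emph{Sufficiency.} Conversely, assume every open subset of $A_\mu$ contains two periodic points of coprime periods. By Theorem~\ref{th:acip}, $\mu=\frac{1}{\kappa}\sum_{j=0}^{\kappa-1}f_*^j\nu$ with $\kappa=\Per(\mu)$, and $f(\Lambda_j\setminus D)\subset\Lambda_{(j+1)\bmod\kappa}$ where $\Lambda_j:=\supp f_*^j\nu$. The measures $f_*^j\nu$ are distinct ergodic acips of $f^\kappa$, hence mutually singular; since they are absolutely continuous with respect to Lebesgue and each $\Lambda_j$ is a finite union of intervals (Theorem~\ref{th:acip}(3)), the exceptional set $\bigcup_{j\ne j'}\Lambda_j\cap\Lambda_{j'}$ has zero Lebesgue measure inside a finite union of intervals and is therefore finite. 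Choose an open subinterval $U\subset A_\mu$ disjoint from this exceptional set. By hypothesis $U$ contains periodic points $p,q$ with coprime $f$-periods $n_p,n_q$; each of $p,q$ belongs to a unique $\Lambda_{j_p},\Lambda_{j_q}$, and $f^{n_p}(p)=p$ forces $\Lambda_{j_p}\cap\Lambda_{(j_p+n_p)\bmod\kappa}\ne\emptyset$. Since $p$ avoids the exceptional set, $\kappa\mid n_p$; similarly $\kappa\mid n_q$. Hence $\kappa\mid\gcd(n_p,n_q)=1$, so $\kappa=1$ and $(f,\mu)$ is exact.

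The main obstacle is step \emph{(iii)} above: rigorously verifying that the sets $E_j$ are essentially disjoint requires converting a mere orbit crossing at a point $y\in E_i\cap E_j$ into a bona fide element of $N(U)$ of residue $j-i\not\equiv 0\pmod d$. This is precisely where the full force of exactness of $(f,\mu)$ and the branch structure from the proof of Theorem~\ref{pr:density} must be invoked.
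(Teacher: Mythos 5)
Your proof is correct, but your necessity argument takes a genuinely different route from the paper's. The paper uses exactness in its strongest form, $\mu(f^n(I))\to 1$, to conclude that after shrinking $U$ the saturation of each $I\in\mathcal{I}_\mu$ covers $U$ at two \emph{consecutive} times $n_I$ and $n_I+1$; combined with the full-branch construction from the proof of Theorem~\ref{pr:density}, this immediately produces fixed points of $f^{i+n_I}$ and $f^{i+n_I+1}$ in $U$, whose minimal periods divide consecutive integers and are therefore coprime. You instead organise the full return times over $U$ into a numerical semigroup $N(U)$, prove $\gcd N(U)=1$ by showing that $d>1$ would force a nontrivial essentially disjoint cyclic decomposition $E_0,\dots,E_{d-1}$ of $A_\mu$ (incompatible with exactness, since $\mu(f^n(E_0))\le\mu(E_{n\bmod d})<1$ along a subsequence), and then invoke Sylvester--Frobenius to extract two consecutive elements of $N(U)$. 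Both arguments terminate in the same arithmetic observation; yours is longer but uses exactness only through the absence of a cyclic decomposition, which is conceptually closer to ``mixing period one'', while the paper's is shorter but leans directly on the covering property. Your step \emph{(iii)} does work as sketched: a point $y\in E_i\cap E_j$ lies in the diffeomorphic image of a piece of $U$ under both $f^{n_1}$ and $f^{n_2}$, and a single full branch of some $f^{m}$ from a small neighbourhood of $y$ onto $U$ yields $n_1+m,\,n_2+m\in N(U)$ with $d\nmid(n_1-n_2)$. In the sufficiency direction the two proofs are essentially identical (contrapositive via the cyclic structure of the mixing components), with yours being more explicit about why the exceptional set $\bigcup_{j\neq j'}\Lambda_j\cap\Lambda_{j'}$ is finite --- a point where you implicitly use that the densities of the $f_*^j\nu$ are positive Lebesgue-a.e.\ on their supports, which is standard for BV densities but deserves a word. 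Both proofs share the same minor imprecisions inherited from Theorem~\ref{pr:density} (one-sided branches at points of $D$, fixed points possibly landing on $\partial U$), all removable by shrinking $U$.
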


\begin{proof}
If $(f,\mu)$ is not exact, then it has $k\geq 2$ mixing components. Therefore, there is an open set $U$ in $A_\mu$ where $k$ must divide the period of any periodic point in $U$. This shows that the coprimality condition of the periods is sufficient for exactness. To show that it is necessary, suppose that $(f,\mu)$ is exact. By exactness, $\mu(f^n(I))\to 1$ as $n\to\infty$ for any interval $I\subset [0,1]$. Let $U$ be an open subinterval of $A_\mu$. Shrinking $U$ if necessary, for every $I\in \mathcal{I}_\mu$ there exists a positive integer $n_I\geq 1$
such that $U\subset \omega_{n_I}(I)$ and $U\subset \omega_{n_I+1}(I)$. Here, $\mathcal{I}_\mu$ denotes the collection of the connected components of $\sint(A_\mu\setminus   D)$. Arguing as in the proof of Theorem~\ref{pr:density}, we conclude that both $f^{i+n_I}$ and $f^{i+n_I+1}$ have a fixed point inside $U$ for some $I\in\mathcal{I}_\mu$ and integer $i\geq 1$. Since $U$ is arbitrary, this shows the existence of two periodic orbits with coprime periods in any open set in $A_\mu$.
\end{proof}

\begin{remark}
The existence of a fixed point in $A_\mu$ is not sufficient to show that $(f,\mu)$ is exact. Indeed, consider the orientation-reversing Lorenz map $f(x)=1-a(x-1/2)\pmod{1}$ with $a=\sqrt{2}$. 
\end{remark}

\begin{remark}\label{rem:exact}
Let $\mu$ be an ergodic acip with separated mixing components (see Part~\eqref{H:2} of the separation condition). If $A_\mu$ contains two periodic points with coprime periods, then $(f,\mu)$ is exact.
\end{remark}

\section{Combinatorial Stability}\label{sec stability}
\label{sec:ergodic stability}
In this section, we prove that a piecewise expanding map $f$ is combinatorially stable provided it satisfies the separation condition. Recall by Proposition~\ref{prop:generic}, the separation condition is generic in the space of piecewise expanding maps on $m$ intervals. 

Let $\SRB(f)$ denote the finite set of ergodic acip's of $f$. 

%


\begin{theorem}
\label{thm:main2}
If $f\in\XX_m$ satisfies the separation condition, then there is a neighbourhood $\VV$ of $f$ in $\XX_m$ such that for every  $g\in\VV$, there is a bijection $\Theta_g$ between $\SRB(f)$ and $\SRB(g)$ satisfying $\Theta_f=\id$ and $\Per(\Theta_g(\mu))=\Per(\mu)$ for every $ \mu \in \SRB(f) $ and $g\in\VV$. Furthermore, for every $\mu\in \SRB(f)$, the map $\VV\ni g\mapsto A_{\Theta_g(\mu)}$ is continuous at $f$ with respect to the Hausdorff metric.

\end{theorem}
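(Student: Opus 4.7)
My plan is to follow the strategy described in the introduction, building for every $\mu\in\SRB(f)$ an open trapping region $U_\mu(g)\subset\Sone$ that varies continuously with $g$ in a neighbourhood $\VV$ of $f$, and then showing that each $U_\mu(g)$ contains the support of a unique ergodic acip of $g$.

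First I construct $U_\mu(g)$. By Lemma~\ref{lem:boundary segment}, $\partial A_\mu$ is a finite union of boundary segments, each starting at a point of $D\cap\sint(A_\mu)$ and, by conditions~\eqref{H:3} and~\eqref{H:4} of the separation hypothesis, terminating at a non-periodic point of $\sint(A_\mu)$ (possibly another discontinuity). For $g\in\VV$ these segments deform continuously into analogous $g$-orbit segments of the perturbed discontinuities in $D_g$, and together they bound a finite union of closed intervals $\overline{U_\mu(g)}$ which reduces to $A_\mu$ at $g=f$, varies Hausdorff-continuously in $g$, and (by condition~\eqref{H:1} and continuity) is pairwise disjoint from $\overline{U_{\mu'}(g)}$ for $\mu'\neq\mu$. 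The one-sided $g$-limits at the endpoints of the shadow segments are interior points of $U_\mu(g)$ by construction, yielding the trapping property $g(U_\mu(g)\setminus D_g)\subset U_\mu(g)$.

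Forward invariance of $U_\mu(g)$ modulo a null set together with Theorem~\ref{th:acip}(4) applied to $g$ produces at least one ergodic acip $\mu_g$ of $g$ with $\supp\mu_g\subset\overline{U_\mu(g)}$. The main obstacle is uniqueness. To exclude a second ergodic acip of $g$ in $\overline{U_\mu(g)}$, I fix an open interval $I\subset\sint(A_\mu)\setminus D_{f^k}$ which after shrinking $\VV$ lies in every $U_\mu(g)$. Lemmas~\ref{le:Omegan}--\ref{le:Omeganexact} imply that the $f$-saturation $\Omega_f(I)$ stabilizes at a finite iterate $\Omega_{f,N}(I)$, a finite union of intervals coinciding with $A_\mu$ up to a finite set; since $\Omega_{g,N}(I)$ depends continuously on $g$, it is Hausdorff-close to $A_\mu$ and contained in $\overline{U_\mu(g)}$ for $g\in\VV$. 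Any candidate second ergodic acip $\nu_g$ would, by Lemma~\ref{le:Omeganexact} applied to $g$, satisfy $\supp\nu_g=\Omega_g(J)$ up to a finite set for every open $J\subset\supp\nu_g$. A pigeonhole argument based on the uniform lower bound on component lengths of $\Omega_g(J)$ (Lemma~\ref{le:Omegan}), combined with the density of periodic orbits of $g$ inside $\supp\nu_g$ (Theorem~\ref{pr:density}) and the persistence of the expanding periodic orbits of $f$ in $A_\mu$, then forces $\supp\nu_g\cap I\neq\emptyset$. By saturation this yields $\supp\nu_g\supset\Omega_g(I)$, hence $\supp\nu_g=\supp\mu_g$ modulo a finite set; absolute continuity plus ergodicity imply $\nu_g=\mu_g$.

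Given uniqueness, $\Theta_g\colon\mu\mapsto\mu_g$ is well defined; injectivity follows from the disjointness of the trapping regions, and surjectivity from the observation that $\Sone\setminus\bigcup_{\mu}\overline{U_\mu(g)}$ has arbitrarily small Lebesgue measure as $\VV$ shrinks, so by Theorem~\ref{th:acip}(4) applied to $g$ every ergodic acip of $g$ has support in some $\overline{U_\mu(g)}$. Invariance of the mixing period follows by running the same argument on $f^k$ and $g^k$ with $k=\Per(\mu)$, using condition~\eqref{H:2} to separate the mixing components $\Lambda_{i,1},\ldots,\Lambda_{i,k}$ and thus build trapping regions for each of them; the cyclic relationship $g_*\Lambda_{i,j}(g)=\Lambda_{i,j+1}(g)$ propagates from $f$ to $g$ by continuity. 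Finally, the Hausdorff continuity of $g\mapsto A_{\Theta_g(\mu)}$ at $f$ is immediate from the sandwich $\Omega_{g,N}(I)\subset A_{\Theta_g(\mu)}\subset\overline{U_\mu(g)}$ combined with the convergence of both bounds to $A_\mu$ as $g\to f$. The delicate point throughout is the quantitative pigeonhole argument of the previous paragraph, whose purpose is to rule out the birth of a second acip in the thin region $\overline{U_\mu(g)}\setminus A_\mu$ as $g$ moves away from $f$.
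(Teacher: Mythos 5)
Your overall architecture (trapping regions built from boundary segments, uniqueness of the acip inside each region, bijection, periods, continuity) matches the paper's, but the central uniqueness step does not close as sketched. The gap is this: to rule out a second ergodic acip $\nu_g$ of $g$ inside $\overline{U_\mu(g)}$ you need to know that \emph{any} open subinterval of $\overline{U_\mu(g)}$ eventually spreads, under $g$, over essentially all of $\overline{U_\mu(g)}$ --- i.e.\ that the perturbed dynamics still dynamically connects the whole region. Your pigeonhole argument assumes exactly this: there is no reason an interval of length $2\eta$ inside $\supp\nu_g$ should meet your fixed interval $I$, and even if both $\supp\nu_g$ and $\supp\mu_g$ met $I$ in open sets $J_1,J_2$, Lemma~\ref{le:Omeganexact} applied to $g$ only gives $\Omega_g(J_i)=\supp(\cdot)$ mod finite for each measure separately; it does not make $\Omega_g(J_1)$ and $\Omega_g(J_2)$ overlap. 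Persistence of the individual expanding periodic points (Lemma~\ref{lem:periodic points}) is not enough: what is needed, and what the paper supplies, is the persistence of the \emph{heteroclinic relations} between them (Lemma~\ref{lem:stable hetero}). The paper takes an $\eta/3$-dense finite set $X\subset A_\mu$ of regular periodic points, pairwise heteroclinically related for $f$ because every neighbourhood saturates to $A_\mu$, and shows the continuations $X'$ remain pairwise heteroclinically related for $g$; then any long interval produced by Lemma~\ref{co:ep} inside either support must capture a point of $X'$, forcing the two supports to share an open set. This stability of connecting orbits is the missing idea in your proposal.

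Two further steps are also flawed. For surjectivity, the complement of $\bigcup_\mu\overline{U_\mu(g)}$ need \emph{not} have small Lebesgue measure: the supports $A_\mu$ can be much smaller than the basins (think of an acip supported on $[0,1/2]$ with $[1/2,1]$ transient), so an ergodic acip of $g$ could a priori live in the large transient region. The paper instead picks an $\eta/2$-dense set $Z$ of basin points that enter $\sint(A_\mu)$ after finitely many iterates, and uses Lemma~\ref{co:ep} to show every acip of $g$ sends an open set into some $U_{\mu}(g)$. For the mixing period, building trapping regions for the components of $f^k$ ($k=\Per(\mu)$) and cyclicity only give that $k$ divides $k'=\Per(\Theta_g(\mu))$; uniqueness of the ergodic $g^k$-acip in each $U^{(j)}_\mu(g)$ does not exclude $k'>k$, since that unique acip could itself fail to be exact for $g^k$. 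You need the characterization of exactness via two periodic points with coprime periods (Proposition~\ref{prop:exact} and Remark~\ref{rem:exact}), whose continuations retain coprime periods, to conclude $k'=k$. Finally, your lower bound in the ``sandwich'' for Hausdorff continuity again presupposes $I\subset A_{\Theta_g(\mu)}$, which is the same unproven point; lower semicontinuity should instead come from the persistent dense net of heteroclinically related periodic points.
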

Under the generic separation condition, Theorem~\ref{thm:main2} shows that the number of ergodic acip's of
$ f $ and the corresponding mixing periods do not change in a neighbourhood of $f$, a property we call \textit{combinatorial stability}. 

\begin{remark}

The separation condition does not prevent the attractor of a perturbation of $f$ from creating a `hole', i.e., split the mixing components without changing its period. An ingredient to create such a hole is the existence of two distinct orbit segments starting in $ D$ and ending at the same point. See Figure~\ref{fig:C} below.

\begin{figure}[h]
\begin{center}
\includegraphics[scale=.35]{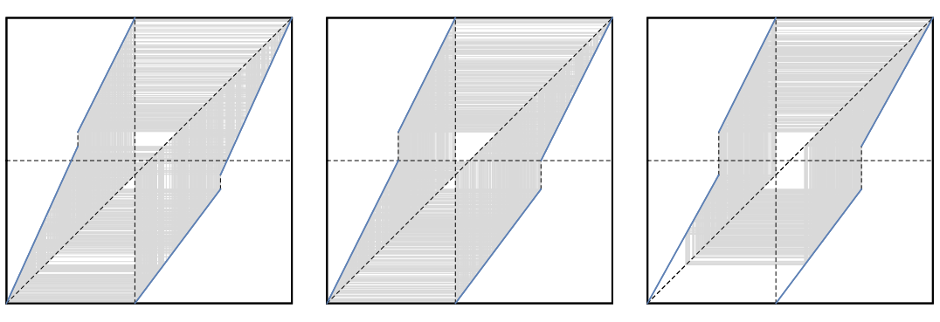}
\end{center}
\caption{Example of a family of piecewise expanding maps where a `hole' appears inside an  exact ergodic acip. The middle map is combinatorially stable since satisfies the separation condition, but has two distinct discontinuities $d_1$ and $d_2$ such that $f(d_1^-)=f(d_2^+)$.}
\label{fig:C}
\end{figure}

\end{remark}


\begin{remark}
If $(f,\mu)$ is mixing and $\mu$ is supported on $[0,1]$, then $f$ is combinatorially stable, i.e., any small perturbation of $f$ has a unique ergodic acip which is also mixing (see Example~\ref{example}).
\end{remark}

Theorem~\ref{thm:main2} is proved in Section~\ref{sec:bijection theta}. In the following we prove several preliminary lemmas.

\subsection{Perturbation lemmas}

The following lemma follows from standard considerations in hyperbolic theory. For the convenience of the reader we include here a proof. 

\begin{lemma}\label{lem:periodic points}
Let $x\in[0,1]$ be a regular periodic point of $f$. There is a neighbourhood $\VV$ of $f$ and a continuous map $g\mapsto x_g$ defined on $\VV$ such that $x_g$ is a periodic point of $g$ having the same period of $x$.
\end{lemma}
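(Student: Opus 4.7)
The plan is to apply the implicit function theorem to the equation $g^n(y)=y$ at the point $y=x$, exploiting the hyperbolicity of $x$ as a repelling periodic point of $f$.

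First, let $n\geq 1$ denote the period of $x$, and write $x_k:=f^k(x)$ for $k=0,\ldots,n-1$. Since $x$ is regular, the orbit $\{x_0,\ldots,x_{n-1}\}$ is disjoint from $D_f$, so each $x_k$ lies in the interior of some branch interval $I_{j_k}(f)$. Choose $\varepsilon>0$ small enough so that the closed $\varepsilon$-neighbourhoods $U_k:=[x_k-\varepsilon,x_k+\varepsilon]$ are pairwise disjoint and contained in $\sint I_{j_k}(f)$, and so that $f(U_k)\subset \sint U_{k+1}$ (indices mod $n$); this is possible because $f$ is $C^1$ on each $U_k$ with $f(x_k)=x_{k+1}$.

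Next, I would use the definition of the metric $d$ to transfer these properties to nearby maps. For $g$ sufficiently close to $f$ in $\XX_m$, the partition points $a_i(g)$ are close to $a_i(f)$, so each $U_k$ is still contained in $\sint I_{j_k}(g)$; moreover, since $\rho_0(f,g)$ and $\rho_{\mathrm{Lip}}(f',g')$ are small and the affine maps $\eta_{j_k}$ are close to the identity on a compact set bounded away from the partition points, $g$ restricted to $U_k$ is $C^1$-close to $f$ restricted to $U_k$. Shrinking $\varepsilon$ and the neighbourhood of $f$ if needed, it follows that $g^n$ is well-defined and $C^1$ on $U_0$, and converges to $f^n$ in $C^1(U_0)$ as $g\to f$ in $(\XX_m,d)$.

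Now define $F_g\colon U_0\to\Rr$ by $F_g(y):=g^n(y)-y$. By the chain rule and the uniform expansion hypothesis, $|F_f'(x)|=|(f^n)'(x)-1|\geq \sigma^n-1>0$, so $x$ is a simple zero of $F_f$. Since $F_g\to F_f$ in $C^1(U_0)$, the implicit function theorem (or, equivalently, the Banach fixed point theorem applied to the contraction $y\mapsto y-F_g(y)/F_f'(x)$ on a small neighbourhood of $x$) produces a neighbourhood $\VV$ of $f$ and a continuous map $\VV\ni g\mapsto x_g\in U_0$ with $F_g(x_g)=0$ and $x_f=x$.

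Finally, I need to verify that the period of $x_g$ is exactly $n$ and not a proper divisor. If $x_g$ had period $d<n$ with $d\mid n$, then $g^d(x_g)=x_g$, but by continuity $g^d(x_g)\to f^d(x)\ne x$ as $g\to f$, contradicting $x_g\to x$; shrinking $\VV$ further rules this out. The only non-routine step is the $C^1$-closeness of $g^n$ to $f^n$ on $U_0$ under the slightly unusual Skorokhod-like metric on $\XX_m$, and this is the main obstacle; it is handled precisely because regularity of $x$ keeps the entire orbit uniformly away from the (moving) discontinuity set.
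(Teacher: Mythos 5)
Your proof is correct, and it is the same in spirit as the paper's: both are standard hyperbolic continuation arguments that produce $x_g$ as the fixed point of a parametrized contraction on a small interval around $x$, using regularity of $x$ to keep the orbit uniformly away from the (moving) discontinuity set. The technical route differs slightly. The paper takes the contraction to be the inverse branch $h_g^{-1}$ of $g^k$ on an interval $J\ni x$ with $J\subset g^k(J)$; since $g$ is uniformly expanding this is automatically a $\sigma^{-k}$-contraction, so only $C^0$-closeness of $g^k$ to $f^k$ on $J$ is needed. You instead run a Newton/implicit-function argument on $F_g(y)=g^n(y)-y$, which requires $C^1$-convergence of $g^n$ to $f^n$ on a neighbourhood of $x$; you correctly identify this as the only nontrivial input and it does hold for the metric $d$ (which controls the Lipschitz norm of the derivatives), and you rightly avoid the literal implicit function theorem --- $\XX_m$ is only a metric space --- by passing to the fixed-point formulation $y\mapsto y-F_g(y)/F_f'(x)$. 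So your argument is a bit heavier in its hypotheses but equally valid. One point where your write-up is actually more complete than the paper's: you verify that the period of $x_g$ is exactly $n$ rather than a proper divisor, a step the paper's proof leaves implicit.
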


\begin{proof}
Let $k>0$ be the period of $x$. Since $f$ is expanding there is an interval $J$ containing $x$ and with closure not intersecting $ D$ such that $f^k(J)$ is an interval, $J\subset f^k(J)$ and $h_f:=f^k|_J$ is a bijection. By continuity, there is a neighbourhood $\VV$ of $f$ such that the same holds for every $g\in\VV$, in particular $h_g:=g^k|_J$ is a bijection. Consider the map $\varphi\colon \VV\times J\to J$ defined by $\varphi(g,x)=h_g^{-1}(x)$. Clearly, $\varphi$ is continuous and $\varphi(g,\cdot)$ is a uniform contraction. Therefore, by the contraction fixed point theorem (continuous dependence of parameters version), $\varphi(g,\cdot)$ has a unique fixed point $x_g$ which depends continuously on $g\in\VV$.
\end{proof}

We say that \textit{$x_g$ is the continuation of $x$ by $g$}.
\medskip

Let $\mu$ be an ergodic acip of $f$. 
In the following lemma we use the boundary segments associated to $\partial A_\mu$ to define a closed forward invariant set for small perturbations of $f$. 


\begin{lemma}\label{lem:trapping}
If $D\cap \partial A_\mu=\emptyset$ and $(0,1)\cap\partial A_\mu$ does not contain periodic points, then there exist a neighbourhood $\VV$ of $f$ and a continuous\footnote{In the Hausdorff metric.} map $g\mapsto U_\mu(g)$ defined on $\VV$ such that
\begin{enumerate}
\item $U_\mu(f)=A_\mu$,
\item $U_\mu(g)$ is a finite union of closed intervals for every $g\in\VV$,
\item $g(U_\mu(g))\subseteq U_\mu(g)$ for every $g\in\VV$.
\end{enumerate}
\end{lemma}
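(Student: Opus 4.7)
The plan is to construct $U_\mu(g)$ by perturbing the endpoints of each connected component of $A_\mu$ along the dynamics of $g$. By Theorem~\ref{th:acip}, we have $A_\mu = \bigsqcup_{j=1}^{r} [\alpha_j,\beta_j]$, a finite disjoint union of closed intervals. By Lemma~\ref{lem:boundary segment}, every boundary point $p \in \partial A_\mu \cap (0,1)$ lies on some boundary segment, so there exist $a = a_i(f) \in D$, an integer $n_p \ge 1$, and a sign $\epsilon \in \{+,-\}$ such that $p = f^{n_p}(a^\epsilon)$. The hypothesis $D \cap \partial A_\mu = \emptyset$ forces the intermediate iterates $f^k(a^\epsilon)$, $1 \le k < n_p$, to lie in $\partial A_\mu \setminus D$, so each iteration step is continuous and the orientation $\epsilon$ is unambiguously preserved along the segment.

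Next, for each boundary point $p \in (0,1) \cap \partial A_\mu$, I would define the perturbation $p(g) := g^{n_p}(a_i(g)^\epsilon)$, fixing once and for all a witnessing boundary segment for each $p$; for $p \in \{0,1\}$ set $p(g) := p$. Since the original intermediate iterates avoid the finite set $D$, for $g$ in a sufficiently small neighbourhood $\VV$ of $f$ the perturbed intermediate iterates remain outside $D_g$, so $g \mapsto p(g)$ is continuous on $\VV$. Shrinking $\VV$ further so the perturbed endpoints stay well ordered inside $[0,1]$, set
\[
U_\mu(g) := \bigsqcup_{j=1}^{r} \bigl[\alpha_j(g),\beta_j(g)\bigr].
\]
Properties (1) and (2) and Hausdorff continuity at $f$ are then immediate from the continuity of each endpoint.

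The main obstacle is verifying the forward invariance (3). The strategy is to show that $g$ maps every endpoint of $\partial U_\mu(g)$, as well as every one-sided image $g(d^\pm)$ with $d \in D_g \cap U_\mu(g)$, into $U_\mu(g)$; then monotonicity of $g$ on each branch $I_k(g) \cap [\alpha_j(g),\beta_j(g)]$ together with the fact that $f([\alpha_j,\beta_j]\cap I_k(f))$ is contained in a single component of $A_\mu$ forces, by a small-perturbation continuity argument, the image to lie in the corresponding component of $U_\mu(g)$. For $p(g) = g^{n_p}(a_i(g)^\epsilon)$, the image $g(p(g)) = g^{n_p+1}(a_i(g)^\epsilon)$ is, in the unperturbed picture, either the next boundary point $p'$ on the same boundary segment (so $g(p(g))$ is, by construction, the perturbation of $p'$ and belongs to $\partial U_\mu(g)$), or a point of $\sint(A_\mu)$ (an open condition, hence $g(p(g)) \in \sint(U_\mu(g))$ for $g$ close to $f$), or an iterate $x_k$ of the same segment, closing a cycle inside $\partial A_\mu$. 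The hypothesis that $(0,1)\cap \partial A_\mu$ has no periodic points forces such cycles to be contained in $\{0,1\}$, where our convention $p(g) = p$ keeps the endpoint inside $[0,1]\subseteq U_\mu(g)$. A symmetric argument handles the images $g(d^\pm)$ for $d \in D_g \cap U_\mu(g)$, whose unperturbed analogues $f(a_i^\pm)$ are themselves boundary points of $A_\mu$ and hence have perturbations belonging to $\partial U_\mu(g)$. Combining these verifications gives $g(U_\mu(g)) \subseteq U_\mu(g)$.
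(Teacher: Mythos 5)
Your overall strategy---perturb the endpoints of the components of $A_\mu$ by following the boundary segments of the perturbed map $g$, then check invariance on the finitely many ``corner'' points---is the same as the paper's. However, there is a genuine gap at the step where you ``fix once and for all a witnessing boundary segment for each $p$'' and set $p(g):=g^{n_p}(a_i(g)^{\epsilon})$. A single point $p\in\partial A_\mu$ may be the terminal (or intermediate) point of \emph{several} boundary segments, and more generally may satisfy $p=f(z^\pm)$ for several distinct $z\in \partial A_\mu\cup(D\cap\sint(A_\mu))$ (e.g.\ two discontinuities $d_1\neq d_2$ with $f(d_1^-)=f(d_2^+)=p$, a configuration the separation condition does not exclude --- compare Figure~\ref{fig:C}). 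For the unperturbed map all these candidate continuations coincide at $p$, but after perturbation the points $g(\varphi_g(z)^\pm)$ for the various predecessors $z$ generically split apart. If $p$ is, say, a left endpoint of a component of $A_\mu$ and your chosen witness produces the larger of the split values, then the image under $g$ of the non-witnessing predecessor (which is itself an endpoint of $U_\mu(g)$, or a one-sided image $g(d^\pm)$ of a discontinuity inside $U_\mu(g)$) lands strictly to the left of $p(g)$, i.e.\ \emph{outside} $U_\mu(g)$, and property (3) fails. Your assertion that the unperturbed analogues of $g(d^\pm)$ ``have perturbations belonging to $\partial U_\mu(g)$'' silently assumes that the perturbation of $p$ is independent of the segment along which it is computed, which is exactly what breaks.

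The paper's proof resolves this by not choosing a witness at all: it defines $\varphi_g$ inductively on the order of the points of $E:=\partial A_\mu\cup(D\cap\sint(A_\mu))$ and, for a left endpoint $x$, sets
$$
\varphi_g(x):=\min\{\,g(\varphi_g(z)^\pm)\colon z\in E,\ x=f(z^\pm)\,\}
$$
(with $\min$ replaced by $\max$ for right endpoints). Taking the extremum over \emph{all} predecessors enlarges $U_\mu(g)$ just enough that every one of the split images is captured, which is what makes the invariance check go through. To repair your argument you would need to replace your single-witness definition of $p(g)$ by such an extremum over all boundary segments (equivalently, all predecessors in $E$) arriving at $p$; the induction on the order of boundary points, which you implicitly use when you say each step is ``by construction'' the perturbation of the next point, is needed to make this definition well posed. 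The rest of your verification (continuity, the role of the two hypotheses in forcing the terminal alternatives, and the treatment of the fixed endpoints $\{0,1\}$) matches the paper.
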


\begin{proof}
Notice that, for every $x\in D\cap \sint (A_\mu)$ we have $f(x^\pm)\in A_\mu$. By slightly abusing notation we shall call the points in $ D\cap \sint (A_\mu)$ together with their images that satisfy $f(x^\pm)\in\sint(A_\mu)$ also boundary segments.
Consider the collection $\BB$ of all boundary segments of $\mu$. Clearly, given $d\in D\cap \sint (A_\mu)$ there is $\gamma=\{x_0,\ldots,x_n\}\in\BB$ such that $x_0=d$. Moreover, by Lemma~\ref{lem:boundary segment}, every point in $\partial A_\mu$ is contained in a boundary segment in $\BB$. Notice that, two or more points in $\partial A_\mu$ may be covered by a single boundary segment and a single point in $\partial A_\mu$ may be covered by more than one boundary segment.

By the hypothesis $D\cap \partial A_\mu=\emptyset$, we have $f(A_\mu)\subseteq A_\mu$. This together with the hypothesis $(0,1)\cap\partial A_\mu$ has no periodic points, implies that given any boundary segment $\gamma=\{x_0,\ldots,x_n\}\in\BB$ it satisfies the properties:
\begin{enumerate}
\item $x_1=f(x_0^\pm)$ and $x_0\in D\cap\sint(A_\mu)$,
\item $x_{i+1}=f(x_i)$ for each $i=1,\ldots,n-1$,
\item $x_i\in\partial A_\mu$ and $x_i\notin D$ for each $i=1,\ldots,n-1$,
\item one of the following alternatives hold:
\begin{enumerate}
\item $x_n\in\sint(A_\mu)$,
\item $x_n=x_{n-2}\in\{0,1\}$,
\item $x_n=x_{n-1}\in\{0,1\}$.
\end{enumerate}
\end{enumerate}

For each $x\in \partial A_\mu$ let $\BB_x\subset \BB$ denote the collection of all boundary segments passing through $x$. By previous observations, $\BB_x\neq\emptyset$. So we may define the \textit{order} of $x\in\partial A_\mu$ to be
$$
\text{ord}(x):=\max\{k\in\Nn\colon x_k=x\text{ for some }\{x_0,\ldots,x_n\}\in\BB_x\}.
$$
It is convenient to set $\text{ord}(x)=0$ for any $x\in D\cap\sint(A_\mu)$. Notice that, $\ord(z)\leq \ord(x)$ whenever $x=f(z^\pm)$ with $x,z\in \partial A_\mu\cup(D\cap\sint(A_\mu))$.

Define $E:=\partial A_\mu\cup(D\cap\sint(A_\mu))$. The points in $E$ induce a partition of $A_\mu$ as a union of closed intervals,
$$
A_\mu=[\alpha_{0},\beta_0]\cup[\alpha_1,\beta_1]\cdots\cup [\alpha_{q},\beta_q],
$$
where $\alpha_{0}<\beta_0\leq\alpha_1<\beta_1\leq\cdots\leq \alpha_q<\beta_q$ and $\alpha_i,\beta_i\in E$. Notice that two consecutive intervals are either disjoint or intersect at a single point belonging to $ D\cap\sint(A_\mu)$. 

Now, let $g\in \VV$ where $\VV$ is an $\varepsilon$-neighbourhood of $f$ and $\varepsilon>0$ is sufficiently small. We define a map $\varphi_g\colon E\to [0,1]$ in the following way. Given $x\in D\cap\sint(A_\mu)$, we set $\varphi_g(x):=a_i(g)$ where $x=a_i(f)$ for some $1\leq i\leq m-1$. Otherwise, suppose that $x\in\partial A_\mu$. We have two cases. Either $x$ is periodic under $f$ or $x$ is not periodic. In the first case, $x\in\{0,1\}$, and we set $\varphi_g(x)=x$. In the second case, we will define $\varphi_g$ on $E$ inductively on the order of the points. So suppose that $\varphi_g$ has been defined for points in $E$ whose order is $\leq n$. Let $x\in E$ such that $\text{ord}(x)=n+1$. We suppose that $x$ is a left boundary point of $\partial A_\mu$, i.e., $[x,x+\delta]\subset A_\mu$ for $\delta>0$ small. The case of $x$ being a right boundary of $A_\mu$ is treated similarly (the $\min$ below becomes a $\max$). Then we define
$$
\varphi_g(x):=\min\{g(\varphi_g(z)^\pm)\colon z\in E,\, x=f(z^\pm)\}.
$$
In this way we have a well-defined map $\varphi_g:E\to[0,1]$. Choosing $\varepsilon>0$ smaller, if necessary, $\varphi_g$ becomes injective. Moreover, $\varphi_f=\id$. Using the map $\varphi_g$ we finally define,
$$
U_\mu(g):=\left[\varphi_g(\alpha_0),\varphi_g(\beta_0)\right]\cup\cdots\cup\left[\varphi_g(\alpha_{q}),\varphi_g(\beta_q)\right].
$$
Now it is simple to check that $U_\mu(g)$ satisfies all properties stated in the lemma. 
\end{proof}

Recall that $ \Sone = \bigcup^{m}_{j} I_{j} $
and $ \ell(f) = \min_{j} |I_{j}| $, where $ |I_{j}| $ denotes the length of $ I_{j} $.

\begin{lemma}
\label{co:ep}
There is a neighbourhood $\VV$ of $f$ and a constant $ \eta=\eta(\VV)>0 $ such that for every interval $ I \subset \Sone $ there exists $ n \geq 1 $ for which $ g^{n}(I) $ contains an open interval of length greater than $ 2\eta $ for every $g\in\VV$.
\end{lemma}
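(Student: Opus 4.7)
The plan is to reduce Lemma~\ref{co:ep} to Lemma~\ref{le:gr} applied to a sufficiently high iterate $g^k$ of $g$, extracting the uniform constant $\eta$ from a neighbourhood-uniform lower bound on the partition length of $g^k$.

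First I would pick an integer $k\ge 1$ with $\sigma_f^k>3$. Because the minimal expansion coefficient varies continuously in the metric $d$, there is a neighbourhood $\VV_1$ of $f$ on which $\sigma_{g^k}\ge\sigma_g^k>2$ for every $g\in\VV_1$, so that $h:=g^k$ meets the hypothesis of Lemma~\ref{le:gr}. Next I would shrink $\VV_1$ to secure a uniform lower bound on $\ell(g^k)$. The partition points of $g^k$ form the finite set $D_{g^k}\cup\{0,1\}$, obtained from $D_g$ by taking $g$-preimages of order at most $k-1$; since each such preimage is a single-valued, locally expanding branch of $g$, these points depend continuously on $g$ in the metric $d$. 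Provided the points of $D_{f^k}\cup\{0,1\}$ are pairwise distinct---a property ensured by the (generic) sufficient hypothesis of Lemma~\ref{lem:sufcondition}, which forces $f^l(D)\cap D=\emptyset$ for every $l\ge 1$---I can find a neighbourhood $\VV\subseteq\VV_1$ of $f$ and a constant $\ell_*>0$ with $\ell(g^k)\ge\ell_*$ for every $g\in\VV$. I then set $\eta:=\ell_*/2$.

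Given any open interval $I\subset\Sone$ and any $g\in\VV$, I would apply Lemma~\ref{le:gr} to $h=g^k$: it yields $i\ge 1$ and an open subinterval $W\subset I$ for which $h^i|_W$ is a diffeomorphism onto $\sint I_j(h)$ for some $j$, and $|h^{i+1}(W)|\ge\sigma_h\ell(h)\ge 2\ell_*>2\eta$. Taking $n:=k(i+1)$ then gives $g^n(I)\supseteq g^n(W)=h^{i+1}(W)$, which is an open interval of length strictly greater than $2\eta$, as required.

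The main obstacle is securing the uniform lower bound $\ell_*$ on $\ell(g^k)$. When two partition points of $f^k$ happen to coincide---precisely when $f^l(D)\cap D\ne\emptyset$ for some $1\le l\le k-1$---arbitrarily small perturbations of $f$ can separate them, creating correspondingly thin cells in $\PP_{g^k}$, so $\ell(g^k)$ need not be bounded below on any neighbourhood of $f$. The generic sufficient condition of Lemma~\ref{lem:sufcondition} excludes this degeneracy and lets the argument proceed as above; alternatively, one could replace $k$ by some larger iterate tailored to $f$ to avoid any such coincidence.
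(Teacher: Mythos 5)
Your argument is essentially the paper's own proof: apply Lemma~\ref{le:gr} to $g^{k}$ with $k$ chosen so that the least expansion of $f^{k}$ exceeds $2$, and take $\eta$ to be a uniform lower bound for $\ell(g^{k})$ over a neighbourhood of $f$. The one place where you go beyond the paper is precisely the obstacle you flag: the paper simply sets $\eta:=\min_{g\in\VV}\eta(g)$ with $\eta(g)=\ell(g^{k})$ and offers no justification that this minimum is positive. Your observation that $\ell(g^{k})$ fails to be bounded below near $f$ whenever two points of $D_{f^{k}}$ coincide (a perturbation separates them and creates an arbitrarily thin cell of the partition of $g^{k}$) is correct, and identifies a real gap in the published argument; your patch via the hypothesis of Lemma~\ref{lem:sufcondition} is sound and generic.

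Two caveats on your write-up. First, the hypothesis of Lemma~\ref{lem:sufcondition} is strictly stronger than the separation condition under which Theorem~\ref{thm:main2} is proved (the separation condition does not forbid orbit segments from $D$ to $D$ away from $\partial A_\mu$; cf.\ Figure~\ref{fig:C}), so as a repair one must either strengthen the standing assumption or argue the lower bound differently; moreover your fallback of ``replacing $k$ by a larger iterate'' does not work, since a coincidence $f^{l}(d^{\pm})=d'$ with $d,d'\in D$ and $l<k$ persists in $\PP_{f^{k'}}$ for every $k'>l$. Second, your $n=k(i+1)$ depends on $g$, because the index $i$ supplied by Lemma~\ref{le:gr} for $h=g^{k}$ varies with $g$; the statement as written asks for a single $n=n(I)$ valid for all $g\in\VV$, and the paper explicitly (if again without proof) claims this uniformity. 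You should either address that point or note that the only uses of the lemma (Lemmas~\ref{lem:injectivity} and~\ref{le:onto}) fix $g$ first, so the weaker, $g$-dependent version you prove actually suffices.
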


\begin{proof}
Apply Lemma~\ref{le:gr} to $ f^{k} $ with $ k > 0 $ being the smallest integer such that the least expansion of $ f^{k} $ is greater than 2. Then $ n = i k $, where $ i $ is as in Lemma~\ref{le:gr} (applied to $ f^{k} $), and $ \eta(f) = \ell(f^{k}) $. 

Let $\eta:=\min_{g\in\VV}\eta(g)$. Since, for every $g$ sufficiently close to $f$, the integer $n$ can be made uniform (not depending on $g$), the conclusion of the lemma follows. 
\end{proof}

\begin{defn}
Given points $x,y\in [0,1]$
we say that {\em $x$ leads to $y$
under  $f$}, and write $x\leadsto y$,
if for every neighbourhood $V$ of $x$ there exists
$n\geq 0$ such that $y\in f^n(V)$. We say that two points are {\em heteroclinically related under  $f$} if $x\leadsto y$ and $y\leadsto x$. 
\end{defn}

Clearly, the heteroclinic relation is an equivalence relation. 
Another key observation is that the heteroclinic relation between periodic points is stable under perturbation.

\begin{lemma}\label{lem:stable hetero}
If two regular periodic points $x$ and $y$ of $f$
are heteroclinically related under  $f$, then there is a neighbourhood $\VV$ of $f$ such that for every $g\in \VV$ the continuations $x_g$ and $y_g$ of the 
periodic points $x$ and $y$ are also heteroclinically related under $g$.
\end{lemma}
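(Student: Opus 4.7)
The strategy combines the continuous dependence of periodic points (Lemma~\ref{lem:periodic points}) with a stable ``portal'' argument that carries a neighbourhood of $x_g$ forward under $g$ to capture $y_g$.

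First, by Lemma~\ref{lem:periodic points} applied to both $x$ and $y$, there is a neighbourhood $\VV_0$ of $f$ on which the continuations $x_g$ and $y_g$ are defined and depend continuously on $g$. Shrinking $\VV_0$ further, both $x_g$ and $y_g$ remain regular: regularity of a periodic point of period $k$ is the finite-time open condition $g^m(x_g) \notin D_g$ for $0 \le m < k$, which is stable under small perturbations because $f^m(x) \notin D_f$ for every $m \geq 0$.

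The heart of the argument is to extract from $x \leadsto y$ a \emph{portal}: an open interval $W$ with $x \in W$ and an integer $n \geq 1$ such that $f^n|_W$ is a continuous diffeomorphism and $y$ lies in the interior of the open interval $f^n(W)$. By continuous dependence of finite iterates on intervals whose closure avoids the relevant discontinuity sets, for $g$ in a sufficiently small neighbourhood $\VV \subset \VV_0$ the same $W$ and $n$ serve as a portal for $g$: $g^n|_W$ remains a continuous diffeomorphism, $g^n(W)$ is Hausdorff-close to $f^n(W)$ and contains $y_g$ in its interior, and $x_g \in W$. To conclude $x_g \leadsto y_g$ under $g$, fix any open neighbourhood $U$ of $x_g$. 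Because $x_g$ is a regular periodic point of $g$ of period $k$ and the local branch of $g^k$ at $x_g$ is an expanding diffeomorphism fixing $x_g$, iterating this branch forward from a small subinterval of $U \cap W$ containing $x_g$ produces, after finitely many iterates, an interval containing $W$; composing with the portal yields $y_g \in g^{Mk + n}(U)$ for the corresponding $M$. The symmetric argument applied to $y \leadsto x$ gives $y_g \leadsto x_g$, completing the proof.

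The main obstacle is the construction of the portal. Although $x \leadsto y$ guarantees $y \in f^n(V)$ for every neighbourhood $V$ of $x$ and some $n = n(V)$, the point of $V$ mapping to $y$ may lie on a different continuity branch of $f^n$ than the one containing $x$, so one cannot simply take $W = V$. The regularity of $x$ (so that $f^n$ is continuous on a full neighbourhood of $x$ for every $n$) combined with the uniform expansion of $f$ is what allows one to select an open subinterval $W \ni x$ and an iterate $n$ with $f^n|_W$ continuous and $f^n(W)$ capturing $y$ in its interior. Once the portal is established, the persistence and recurrence arguments reduce to standard applications of continuous dependence of finite iterates together with local expansion at expanding periodic points.
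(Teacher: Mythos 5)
Your overall architecture (continuations via Lemma~\ref{lem:periodic points}, a persistent mechanism carrying a neighbourhood of $x_g$ onto $y_g$, then local expansion at the periodic point to handle arbitrary neighbourhoods) is the same as the paper's, but the central step --- the existence of your ``portal'' --- is a genuine gap: such a portal need not exist. You require an open interval $W\ni x$ on which $f^n$ is continuous with $y\in\sint(f^n(W))$; any such $W$ is contained in the maximal continuity branch $W_n$ of $f^n$ through $x$, so you need $y$ to be reached by the \emph{single branch through $x$}. But $x\leadsto y$ only guarantees that $y$ lies in the full, multi-branch image $f^n(V)$, and the branch through $x$ can be permanently trapped. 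Concretely, take $m=3$ with $I_1=[0,0.4]$, $I_2=[0.4,0.6]$, $I_3=[0.6,1]$, $f(z)=2.5z$ on $I_1$, $f(z)=2.5(z-0.5)+0.5$ on $I_2$, $f(z)=2.5(z-0.6)$ on $I_3$. Then $x=0.5$ is a regular fixed point, the map is exact with $A_\mu=[0,1]$, and one checks that $f^n(W_n)=(0.25,0.75)$ for every $n\ge 1$ (the branch image stabilizes at $f(\sint I_2)$), while $f^n(V)=(0,1)$ eventually for every neighbourhood $V$ of $x$. So $x$ is heteroclinically related to points $y$ near $0.9$, yet no portal from $x$ to $y$ exists, and your proof cannot start. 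Regularity and uniform expansion do not rescue this: they control the branch \emph{through} $x$, which is exactly the object that gets stuck.

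The paper's proof avoids the issue by never restricting to one branch. It fixes the radius $\tau:=\tfrac12\inf_{g\in\VV}\dist(x_g,D_{g^p})>0$ ($p$ the period of $x$), uses $x\leadsto y$ once to get a single $n$ with $y\in f^n(I_\tau(x))$ --- a full, multi-branch image of a neighbourhood of \emph{fixed} size --- and shrinks $\VV$ so that $y_g\in g^n(I_\tau(x_g))$ persists. The single-branch expansion argument is then used only where it is safe: for an arbitrary interval $J\ni x_g$, iterating $g^p$ keeps the growing images inside the continuity branch of $g^p$ around $x_g$ until they cover $I_\tau(x_g)$ (this is what the choice of $\tau$ guarantees), whence $y_g\in g^{kp+n}(J)$. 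Your final expansion step is essentially this last argument; what is missing is the decoupling of ``blow up to a fixed-size neighbourhood along one branch'' from ``reach $y$ using all branches of a fixed iterate''. If you replace the portal by the pair $(I_\tau(x),n)$ and argue the persistence of $y_g\in g^n(I_\tau(x_g))$, the rest of your proof goes through.
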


\begin{proof}
Let $x$ and $y$ be two regular periodic points for $f$ such that  $x\leadsto y$.
By Lemma~\ref{lem:periodic points}, $x$ and $y$ have continuations $x_g$ and $y_g$ for every $g\in\VV$ where $\VV$ is a neighbourhood of $f$. We will show that $x_g\leadsto y_g$. Denote by $p$ the period of $x$ and $x_g$.
Define
$$
\tau:= \frac12\inf_{g\in\VV} \dist(x_g,D_{g^p})>0.
$$  
Notice that $I_\tau(x_g)\cap D_{g^p}=\emptyset$ for every $g\in\VV$ where $I_\tau(z):=(z-\tau,z+\tau)$.
Since $x\leadsto y$, there is $n=n(x,\tau)\geq0$ such that $y\in f^n(I_\tau(x))$. Shrinking $\VV$ if necessary, we may assume that $y_g\in g^n(I_{\tau}(x_g))$ for every $g\in\VV$.
Now let $J$ be any interval containing $x_g$. Since $g$ is expanding, there is $k\geq0$ such that $I_\tau(x_g)\subset g^{kp}(J)$. Thus, $y_g\in g^{kp+n}(J)$. This shows that $x_g\leadsto y_g$.
\end{proof}


\subsection{Proof of Theorem~\ref{thm:main2}}
\label{sec:bijection theta}

Let $\mu\in\SRB(f)$ be an ergodic acip of a piecewise expanding map $f$ satisfying the separation condition. 


We divide the proof of Theorem~\ref{thm:main2} in four lemmas. Throughout the proof, we  assume that the neighbourhood $\VV$ of $f$ is chosen to be sufficiently small so that the hypothesis of the perturbation lemmas are verified.

\begin{lemma}\label{lem:injectivity}
There is a neighbourhood $\VV$ of $f$ such that for each $g\in\VV$, there exists a unique $\nu\in \SRB(g)$ such that $A_\nu\subseteq U_\mu(g)$.
\end{lemma}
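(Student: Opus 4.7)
The plan is to establish existence via the trapping property of $U_\mu(g)$ combined with Theorem~\ref{th:acip}, and uniqueness by transferring the topological transitivity of $(f,\mu)$ encoded in Lemma~\ref{le:Omeganexact} from $f$ to $g$, deriving a contradiction from the forced finiteness of $A_{\nu_1}\cap A_{\nu_2}$ whenever two ergodic acips $\nu_1\ne\nu_2$ of $g$ would both have supports in $U_\mu(g)$.

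For existence, when $g$ is close to $f$, the Hausdorff continuity of $U_\mu(g)$ supplied by Lemma~\ref{lem:trapping} ensures $|U_\mu(g)|>0$. By Theorem~\ref{th:acip}(4) the basins of the ergodic acips of $g$ cover $\Sone$ up to a Lebesgue null set, so at least one ergodic acip $\nu$ of $g$ has basin meeting $U_\mu(g)$ in positive Lebesgue measure. For almost every $x$ in this intersection, the forward $g$-orbit is trapped in the closed forward-invariant set $U_\mu(g)$, and weak-$*$ convergence of its empirical averages to $\nu$ forces $A_\nu\subseteq U_\mu(g)$.

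For uniqueness, suppose $\nu_1\ne\nu_2$ are both in $\SRB(g)$ with $A_{\nu_i}\subseteq U_\mu(g)$. Each $A_{\nu_i}$ is a finite union of closed intervals by Theorem~\ref{th:acip}(3); an overlap of $A_{\nu_1}$ and $A_{\nu_2}$ on any subinterval would force the two BV densities to both be positive on a common open subset, contradicting mutual singularity, hence $A_{\nu_1}\cap A_{\nu_2}$ is finite. Standard Lasota--Yorke estimates for piecewise expanding maps give a uniform BV upper bound on the density of any acip of $g$ over a $d$-neighborhood of $f$, hence a uniform lower bound $|A_{\nu_i}|\ge 1/M>0$; combined with the Hausdorff convergence $U_\mu(g)\to A_\mu$, this lets me pick open intervals $V_i\subseteq \sint(A_{\nu_i})\cap \sint(A_\mu)$ for $i=1,2$. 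I apply Lemma~\ref{le:Omeganexact} to $(f,\mu)$ with $V_1\subset A_\mu$: the set $A_\mu\setminus \Omega(V_1)$ is finite, so $V_2\cap \omega_n(V_1)\ne \emptyset$ for some $n$, and since $\omega_n(V_1)=f^n(V_1\setminus D_{f^n})$ there is a connected component $V_1'\subseteq V_1\setminus D_{f^n}$ with $f^n|_{V_1'}$ smooth and $f^n(V_1')\cap V_2\ne \emptyset$. For $g$ close enough to $f$, $V_1'\cap D_{g^n}=\emptyset$, $g^n|_{V_1'}$ is smooth, and $g^n(V_1')$ is Hausdorff-close to $f^n(V_1')$, so $g^n(V_1')\cap V_2$ remains a nonempty open set. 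On the other hand, $V_1'\subset \sint(A_{\nu_1})$ together with $V_1'\cap D_{g^n}=\emptyset$ permits a straightforward induction using the forward invariance $g(A_{\nu_1}\setminus D_g)\subseteq A_{\nu_1}$, yielding $g^n(V_1')\subseteq A_{\nu_1}$. Hence the nonempty open set $g^n(V_1')\cap V_2$ is contained in the finite set $A_{\nu_1}\cap A_{\nu_2}$, a contradiction.

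The step I expect to require the most care is placing $V_1,V_2$ simultaneously inside $\sint(A_\mu)$ (so that Lemma~\ref{le:Omeganexact} applies) and inside $\sint(A_{\nu_i})$ (so that the $g$-forward invariance of $A_{\nu_i}$ modulo $D_g$ can be iterated). The uniform Lasota--Yorke lower bound on $|A_{\nu_i}|$ across a neighborhood of $f$, which is what makes this joint choice possible, is the main technical input the argument needs and may require some justification in the metric $d$ used here.
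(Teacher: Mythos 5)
Your existence argument is fine and is essentially the paper's (forward invariance of $U_\mu(g)$ from Lemma~\ref{lem:trapping}(3) plus Theorem~\ref{th:acip}(4) and Birkhoff averaging). The uniqueness argument, however, has a genuine gap: a circular order of quantifiers. The neighbourhood $\VV$ must be fixed \emph{before} $g$ is given, but in your argument the intervals $V_1,V_2$ are chosen inside $\sint(A_{\nu_1})$ and $\sint(A_{\nu_2})$ for acips $\nu_1,\nu_2$ of the \emph{given} $g$, and only then does Lemma~\ref{le:Omeganexact} produce the connection time $n$ with $\omega_n(V_1)\cap V_2\neq\emptyset$. The final step ``for $g$ close enough to $f$, $g^n(V_1')\cap V_2$ remains nonempty'' requires $\|g^n-f^n\|_{C^0}$ to be small on $V_1'$, and this quantity is of order $\Lambda^n\, d(f,g)$ with $\Lambda=\max|f'|>1$; since $n$ depends on $g$ (through $V_1,V_2$) and is not bounded in terms of $f$ alone, there is no admissible choice of $\VV$ that makes this work. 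This is precisely the difficulty the paper's proof is built to avoid: it fixes, depending only on $f$ and $\VV$, a scale $\eta$ (Lemma~\ref{co:ep}) and a finite $\eta/3$-dense skeleton $X$ of mutually heteroclinically related regular periodic points of $f$ (Theorem~\ref{pr:density}, Lemma~\ref{le:gr}, Lemma~\ref{le:Omeganexact}), and then only ever perturbs these finitely many, $g$-independent objects (Lemmas~\ref{lem:periodic points} and~\ref{lem:stable hetero}); the $g$-dependent sets $A_{\nu_i}$ are connected to this skeleton by applying Lemma~\ref{co:ep} \emph{to $g$ itself}, which needs no comparison between high iterates of $f$ and $g$. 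To repair your argument you would have to introduce an analogous $f$-dependent uniformization (e.g.\ a fixed finite family of test intervals with pre-established connection times), at which point you are essentially reconstructing the paper's proof.

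Two further points, less serious but worth flagging. First, several quantitative inputs you invoke are external to the paper and nontrivial in their own right: the uniform Lasota--Yorke bound over a $d$-neighbourhood of $f$ (delicate because branch lengths of iterates can degenerate under perturbation; this is the subject of~\cite{eg13}), the resulting uniform lower bound on $|A_{\nu_i}|$, a uniform bound on the number of components of $A_{\nu_i}$ needed to extract an interval $V_i$ of definite length, and the positivity of BV invariant densities on the interior of their supports (needed for the finiteness of $A_{\nu_1}\cap A_{\nu_2}$; mutual singularity alone does not give it). Second, a component $V_1'$ of $V_1\setminus D_{f^n}$ has endpoints that may lie in $D_{f^n}$, so $V_1'\cap D_{g^n}=\emptyset$ is not automatic under perturbation; this is fixable by shrinking $V_1'$ to a compactly contained subinterval still mapped into $V_2$, but as written the claim is not justified.
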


\begin{proof}

Let $\VV$ be a neighbourhood of $f$ for which the conclusion of Lemma~\ref{lem:trapping} holds. Given $g\in\VV$, the existence of $\nu\in\SRB(g)$ such that $A_\nu\subseteq U_\mu(g)$ follows directly from Part (3) of Lemma~\ref{lem:trapping} and Theorem~\ref{th:acip}. 

To prove the uniqueness, suppose that $\nu_1$ and $\nu_2$ are two ergodic acips of $g\in\VV$ whose supports are contained in $U_\mu(g)$.
We want to show that $\nu_1=\nu_2$.

Take $\eta= \eta(\VV)>0$ given by Lemma~\ref{co:ep}. Let $k\in\Nn$ such that $f^k$ has least expansion coefficient $>2$.
Also let $\II_\mu$ be the set of connected components of $\sint(A_\mu\setminus D_{f^k})$ and $\Omega$ the intersection of $\Omega(I)$ over all $I$ belonging to $\II_\mu$. Recall that $\Omega(I)$ is defined in \eqref{omegaI}. By Lemma~\ref{le:Omeganexact}, $\Omega$ equals $A_\mu$ except for a finite set of points, which we denote by $E$. Since periodic points are dense in $A_\mu$ (by Theorem~\ref{pr:density}), we can take a $\eta/3$-dense set $X:=\{x_1,\ldots, x_r\}\subset \Omega\setminus D$
of regular periodic points of $f$ such that $x_i$ and $x_j$ are heteroclinically related under  $f$ for all $1\leq i,j\leq r$.
Indeed, by Lemma~\ref{le:gr}, for any $x\in X$ there is $I\in \II_\mu$ such that $I\subset f^{nk}(V)$ for some $n\in\Nn$ and neighbourhood $V$ of $x$. But $\Omega(I)$ contains $A_\mu\setminus E$. Therefore, $x\leadsto y$ for any $y\in X$.

Let $X':=\{x_1',\ldots, x_r'\}$ denote the set of continuations of the periodic points in $X$ for some nearby map $g\in\VV$. According to Lemma~\ref{lem:stable hetero}, by choosing $\VV$ sufficiently small, we can assume that 
$x_i'$ and $x_j'$ are heteroclinically related under $g$ for all $1\leq i,j\leq r$. We can also assume that $\abs{x_j-x_j'}<\eta/3$
for all $j=1,\ldots, r$ and that the Hausdorff distance between $U_\mu(f)$ and $U_\mu(g)$ is also less than $\eta/3$. This follows from the continuity of the maps in Lemma~\ref{lem:periodic points} and Lemma~\ref{lem:trapping}.

Take now  points $y_i\in A_{\nu_i}\subseteq U_\mu(g)$ and   neighbourhoods $V_i$ of $y_i$ in $A_{\nu_i}$. By Lemma~\ref{co:ep}, there exist subintervals $I_i\subset V_i$,  integers $n_i\geq 1$ and points $z_i\in U_\mu(g)$ such that $g^{n_i}(I_i) =(z_i-\eta,z_i+\eta)$. Because of the previous considerations, 
\begin{align*}
\text{dist}(z_i,X')&\leq \text{dist}(U_\mu(g),U_\mu(f))+\text{dist}(U_\mu(f),X)+\text{dist}(X,X')\\
&< \frac\eta3+\frac\eta3+\frac\eta3= \eta.
\end{align*}
Thus, $g^{n_i}(I_i)\cap X'\neq \emptyset$ which implies that $y_i$ leads to
a periodic point in $X'$ under $g$.
Finally, since the points $y_i\in A_{\nu_i}$ are arbitrary,
and all points in $X'$ are heteroclinically related under $g$, it follows that $\nu_1=\nu_2$.
\end{proof}

Lemma~\ref{lem:injectivity} shows that we have a well-defined map for every $g\in\VV$,
$$
\Theta_g\colon\SRB(f)\to \SRB(g),
\qquad
\mu\mapsto \nu,
$$ 
where $\nu\in\SRB(g)$ comes from Lemma~\ref{lem:injectivity}. Clearly, $\Theta_f=\id$. In the following two lemmas we show that $\Theta_g$ is a bijection as stated in Theorem~\ref{thm:main2}. 

\begin{lemma}
The map $\Theta_g$ is one-to-one.
\end{lemma}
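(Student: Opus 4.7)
The plan is to argue by contradiction using the disjointness of supports guaranteed by the separation condition, together with the Hausdorff continuity of the trapping regions $U_\mu(g)$ established in Lemma~\ref{lem:trapping}. Concretely, I would take two distinct $\mu_1,\mu_2 \in \SRB(f)$ and show that for $g$ in a sufficiently small neighbourhood $\VV$ of $f$, the trapping regions $U_{\mu_1}(g)$ and $U_{\mu_2}(g)$ remain disjoint, so no single ergodic acip $\nu$ of $g$ can be supported inside both.

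The key steps are as follows. First, by Part~\eqref{H:1} of the separation condition, $A_{\mu_1}\cap A_{\mu_2}=\emptyset$. Since both sets are compact subsets of $[0,1]$, there exists $\delta>0$ with $\dist(A_{\mu_1},A_{\mu_2}) \geq 3\delta$. Second, Lemma~\ref{lem:trapping} provides a neighbourhood $\VV$ of $f$ on which $g\mapsto U_{\mu_i}(g)$ is Hausdorff-continuous with $U_{\mu_i}(f)=A_{\mu_i}$ for $i=1,2$. Shrinking $\VV$ if necessary, I can guarantee that the Hausdorff distance between $U_{\mu_i}(g)$ and $A_{\mu_i}$ is less than $\delta$ for every $g\in\VV$, which forces $U_{\mu_1}(g)\cap U_{\mu_2}(g)=\emptyset$.

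Finally, suppose towards a contradiction that $\Theta_g(\mu_1)=\Theta_g(\mu_2)=\nu$ for some $g\in\VV$. Then by the definition of $\Theta_g$ provided by Lemma~\ref{lem:injectivity}, one has $A_\nu\subseteq U_{\mu_1}(g)$ and $A_\nu\subseteq U_{\mu_2}(g)$, hence $A_\nu\subseteq U_{\mu_1}(g)\cap U_{\mu_2}(g)=\emptyset$, which is impossible since $\nu$ is a probability measure and $A_\nu\neq\emptyset$. Therefore $\Theta_g$ is injective on $\SRB(f)$.

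I do not expect a genuine obstacle here: all the substantive work has been completed in earlier lemmas. The only subtlety is to remember to intersect the neighbourhood $\VV$ produced by Lemma~\ref{lem:injectivity} with the one used to separate $U_{\mu_1}(g)$ from $U_{\mu_2}(g)$, and to do this uniformly over all (finitely many) pairs of distinct elements of the finite set $\SRB(f)$, which is harmless.
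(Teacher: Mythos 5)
Your argument is correct and is essentially identical to the paper's proof: both use Part~(1) of the separation condition to separate $A_{\mu_1}$ and $A_{\mu_2}$, shrink $\VV$ using the Hausdorff continuity from Lemma~\ref{lem:trapping} so that $U_{\mu_1}(g)\cap U_{\mu_2}(g)=\emptyset$, and conclude that no $A_\nu$ can lie in both. The only difference is that you spell out the $\delta$-separation and the finite intersection of neighbourhoods, which the paper leaves implicit.
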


\begin{proof}
By Part~\ref{H:1} of the separation condition, $A_{\mu_1}\cap A_{\mu_2}=\emptyset$ for every $\mu_1\neq\mu_2$ in $\SRB(f)$. 
We can further suppose that $\VV$ is sufficiently small so that $U_{\mu_1}(g)\cap U_{\mu_2}(g)=\emptyset$ for every $g\in\VV$ and every $\mu_1\neq\mu_2$ in $\SRB(f)$. Now suppose that $\nu=\Theta_g(\mu_1)=\Theta_g(\mu_2)$.
But, $A_{\nu}\subseteq U_{\mu_i}(g)$ which can only happen if $\mu_1=\mu_2$. 
\end{proof}

\begin{lemma}
\label{le:onto}
The map $\Theta_g$ is onto.
\end{lemma}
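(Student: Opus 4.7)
The plan is to show that every $\nu\in\SRB(g)$ satisfies $A_\nu\subseteq U_{\mu_i}(g)$ for some $i$; by Lemma~\ref{lem:injectivity} this forces $\nu=\Theta_g(\mu_i)$, establishing surjectivity.

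First I extract a uniform scale. By Lemma~\ref{co:ep} applied to any subinterval of $A_\nu$, and using that $A_\nu$ is $g$-forward invariant modulo a finite set, $A_\nu$ contains an interval $J$ of length at least $2\eta$, where $\eta>0$ is independent of $g\in\VV$.

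Next I prove a uniform funneling property for $f$: there exists $N\in\Nn$ such that for every closed interval $J_0\subseteq [0,1]$ with $|J_0|\geq 2\eta$, one can find an open subinterval $J_0'\subseteq J_0$, an index $i\in\{1,\ldots,k\}$, and an integer $n\leq N$ such that $f^n$ is continuous on $J_0'$ and $f^n(J_0')\subseteq\sint(A_{\mu_i})$. The pointwise statement follows from Theorem~\ref{th:acip}(4): since the union of the basins of $\mu_1,\ldots,\mu_k$ has full Lebesgue measure, $J_0$ contains a regular point $x$ in some $\mathrm{basin}(\mu_i)$; thus $f^n(x)\in\sint(A_{\mu_i})$ for some $n$, and regularity of $x$ together with continuity of $f^n$ around $x$ yields the required $J_0'$. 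Uniformity of $n$ follows from compactness of $\{[a,b]\subseteq[0,1]: b-a\geq 2\eta\}$ in the Hausdorff metric, together with the observation that any sufficiently small Hausdorff perturbation of $J_0$ still contains $J_0'$, so the same data $(J_0',i,n)$ remains valid on an open neighbourhood of $J_0$; a finite subcover yields the bound $N$.

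Shrinking $\VV$ further to accommodate the finitely many representatives arising from this cover, the funneling property transfers to every $g\in\VV$: since $f^n(J_0')\subseteq\sint(A_{\mu_i})$ is compactly contained, $g^n$ is close to $f^n$ on $J_0'$, and $U_{\mu_i}(g)$ is Hausdorff-close to $A_{\mu_i}$ by Lemma~\ref{lem:trapping}, we obtain $g^n(J_0')\subseteq U_{\mu_i}(g)$. Applying this to the interval $J\subseteq A_\nu$ produces an open $J'\subseteq J$ with $g^n(J')\subseteq A_\nu\cap U_{\mu_i}(g)$ (using forward invariance of $A_\nu$), so $E:=A_\nu\cap U_{\mu_i}(g)$ has positive Lebesgue, hence positive $\nu$-, measure. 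Since $E$ is $g$-forward invariant modulo a finite set, $E\subseteq g^{-1}(E)$ up to a $\nu$-null set, and $g$-invariance of $\nu$ gives $\nu(g^{-1}(E)\setminus E)=0$; hence $E$ is $\nu$-invariant, and ergodicity forces $\nu(E)=1$. Closedness of $U_{\mu_i}(g)$ together with $A_\nu=\supp\nu$ then yields $A_\nu\subseteq U_{\mu_i}(g)$, completing the argument. The principal difficulty is obtaining the uniform funneling time $N$, where the compactness argument does the real work; once this is secured, the ergodicity step is a routine consequence of forward invariance of $E$.
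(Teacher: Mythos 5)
Your proof is correct, and it takes a genuinely different route from the paper's in both of its halves. For the uniformity of the funneling time you compactify the family of intervals of length at least $2\eta$ in the Hausdorff metric and extract a finite subcover, whereas the paper instead fixes a finite $\eta/2$-dense set $Z$ of (regular) points in the union of the basins, each with a finite entry time into some $\sint(A_{\mu_i})$; the two devices are interchangeable and both rest on Part~(4) of Theorem~\ref{th:acip} together with Lemma~\ref{co:ep}. The more substantive divergence is the endgame: once an interval of $A_\nu$ has been pushed into $U_{\mu_i}(g)$, the paper argues ``as in the proof of Lemma~\ref{lem:injectivity}'', reusing the heteroclinic web of continued periodic points to identify $\nu$ with $\Theta_g(\mu_i)$, while you observe that $E=A_\nu\cap U_{\mu_i}(g)$ is forward invariant modulo a finite set, conclude $\nu(E)=1$ by ergodicity, deduce $A_\nu\subseteq U_{\mu_i}(g)$ from closedness, and only then invoke the uniqueness statement of Lemma~\ref{lem:injectivity}. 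Your ending is more self-contained and avoids a second pass through the periodic-point machinery. One small slip: ``positive Lebesgue, hence positive $\nu$-measure'' is backwards, since absolute continuity of $\nu$ with respect to Lebesgue only transfers \emph{null} sets from Lebesgue to $\nu$. The repair is immediate: $E$ contains $g^n(J'\setminus D_{g^n})$, a nonempty open subset of $\supp\nu$, which has positive $\nu$-measure by the definition of support (alternatively, $\nu(g^n(J'))=\nu(g^{-n}(g^n(J')))\geq\nu(J')>0$ by invariance). With that adjustment the argument goes through.
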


\begin{proof}
By Part~(3) of Theorem~\ref{th:acip}, the union $B$ of all basins of attraction $B(\mu_i)$\footnote{Recall that $x\in B(\mu)$ iff for every continuous function $\varphi:[0,1]\to\Rr$ we have $$\lim_{n\to\infty}\frac{1}{n}\sum_{k=0}^{n-1}\varphi (f^n(x))=\int\varphi\,d\mu.$$} over the elements $\mu_i\in\SRB(f)$ coincides with the interval $[0,1]$ up to a zero Lebesgue measure set. Let $\eta=\eta(\VV)>0$ be the constant in Lemma~\ref{co:ep}. 

Given $\mu\in\SRB(f)$, let $\varphi_\mu$ be a continuous function having compact support inside $\sint(A_\mu)$. It follows that
 $$\lim_{n\to\infty}\frac1n \sum^{n-1}_{k=0} \varphi(f^{k}(x)) =\int\varphi_\mu\,d\mu > 0,\quad\forall\, x \in B(\mu).
 $$
Hence, for every $x\in B(\mu)$ there are infinitely many integers $k_i\geq0$ such that $f^{k_i}(x)\in\sint(A_\mu)$. This implies that there is an $\eta/2$-dense set $Z:=\{z_1,\dots, z_r\} \subset B$ of $[0,1]$ such that for every $z_i\in Z$, we can find $k \in\Nn$ and $\mu \in \SRB(f)$ for which $f^{k}(z_{i}) \in \sint (A_\mu )$.

Now, let $\mu'\in\SRB(g)$ and take $y\in  A_{\mu'}$. Also let $W$ be a neighbourhood of $y$ in $A_{\mu'}$. By Lemma~\ref{co:ep}, there is $ n \in \Nn $ such that $g^n(W)$ contains an interval of length greater than or equal to $ \eta$. Thus, $g^n(W)$ contains in its interior a point $z_i\in Z$, i.e., $y$ leads to $z_i$ under $g$. It follows that the intersection $g^{n+t_{i}}(W) \cap U_{\mu_{j_i}}(g) $ contains an interval for some $t_i\geq0$ and $1\leq j_i\leq \#\SRB(f)$. Arguing as in the proof of Lemma~\ref{lem:injectivity},  this shows that $\mu'=\Theta_g(\mu_{j_i})$, and thus $\Theta_g$ is onto.

\end{proof}

It remains to show that $f$ and $g\in\VV$ have the same number of mixing components.

\begin{lemma}
$\Per(\Theta_g(\mu))=\Per(\mu)$ for every $ \mu \in \SRB(f) $ and $g\in\VV$.
\end{lemma}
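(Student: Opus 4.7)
Let $k=\Per(\mu)$ and $\nu=\Theta_g(\mu)$; the plan is to show $\Per(\nu)=k$ by proving both $k\mid\Per(\nu)$ and $\Per(\nu)\mid k$. Denote by $\tau_j := f^j_*\nu_0$ the $f^k$-exact ergodic acips appearing in the decomposition of $\mu$, with supports $\Lambda_j := \supp\tau_j$ for $j=0,\ldots,k-1$; by separation condition~\eqref{H:2} the $\Lambda_j$ are pairwise disjoint and $f$ permutes them cyclically.

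First I check that $\tau_j$, viewed as an acip of $f^k$, satisfies the hypotheses of Lemma~\ref{lem:trapping}: the inclusion $\partial\Lambda_j\subset\partial A_\mu$ is immediate from~\eqref{H:2}, $D_{f^k}\cap\partial\Lambda_j=\emptyset$ follows from~\eqref{H:3}, and $(0,1)\cap\partial\Lambda_j$ contains no $f^k$-periodic point by~\eqref{H:4}. As $(f^k,\tau_j)$ is exact, the argument of Lemma~\ref{lem:injectivity} adapts to the pair $(f^k,g^k)$ relative to $\tau_j$: for $g$ in a small enough neighbourhood of $f$, there is a unique ergodic acip $\tilde\nu_j$ of $g^k$ whose support lies in the trapping region $V_j := V_j(g^k)$ furnished by Lemma~\ref{lem:trapping}.

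Now $g_*\tilde\nu_j$ is a $g^k$-ergodic acip with support close to $f(\Lambda_j)=\Lambda_{j+1\bmod k}$, so uniqueness gives $g_*\tilde\nu_j=\tilde\nu_{j+1\bmod k}$. Then $\tilde\nu:=\frac{1}{k}\sum_{j=0}^{k-1}\tilde\nu_j$ is a $g$-ergodic acip with support inside $U_\mu(g)$, whence $\tilde\nu=\nu$ by the uniqueness in Lemma~\ref{lem:injectivity}. This exhibits $\nu$ as having $k$ distinct $g^k$-ergodic components cyclically permuted by $g$. Since, in general, the number of $g^n$-ergodic components of a $g$-ergodic acip of mixing period $k'$ equals $\gcd(n,k')$, one gets $\gcd(k,\Per(\nu))=k$, i.e.\ $k\mid\Per(\nu)$.

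For the opposite divisibility, write $\Per(\nu)=kp$ and suppose for contradiction that $p\ge 2$. By Theorem~\ref{th:acip}, $\tilde\nu_j$ splits under $g^{kp}$ as $\frac{1}{p}\sum_{\ell=0}^{p-1}g^{k\ell}_*\hat\nu$ with $(g^{kp},\hat\nu)$ exact, and the supports of all $g^{k\ell}_*\hat\nu$ lie in $A_{\tilde\nu_j}\subset V_j$. The region $V_j$ is $g^k$-forward-invariant, hence $g^{kp}$-forward-invariant. Re-running the uniqueness part of the proof of Lemma~\ref{lem:injectivity} at the pair $(f^{kp},g^{kp})$ relative to $\tau_j$, with $V_j$ playing the role of the trapping region — which is valid because $(f^{kp},\tau_j)$ is exact (as a power of the mixing $(f^k,\tau_j)$), and an $\eta$-dense set of heteroclinically related $f^{kp}$-periodic orbits inside $\Lambda_j$ (supplied by Proposition~\ref{prop:exact}) continues via Lemmas~\ref{lem:periodic points} and~\ref{lem:stable hetero} to $g^{kp}$-periodic orbits inside $V_j$ — shows that any $g^{kp}$-ergodic acip with support in $V_j$ is unique. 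Consequently all $g^{k\ell}_*\hat\nu$ coincide, so $\hat\nu$ is $g^k$-invariant, making $\tilde\nu_j=\hat\nu$ itself $g^{kp}$-ergodic, a contradiction. Hence $p=1$ and $\Per(\Theta_g(\mu))=\Per(\mu)$. The main obstacle is this last step: one cannot apply Lemma~\ref{lem:trapping} at the iterate $f^{kp}$ directly, because $D_{f^{kp}}\cap\partial\Lambda_j=\emptyset$ does not obviously follow from the separation condition; instead one recycles the trapping region $V_j$ already produced at the power $k$ and runs only the uniqueness portion of the argument of Lemma~\ref{lem:injectivity} at the iterate $g^{kp}$, using Lemma~\ref{co:ep} applied to $g^{kp}$.
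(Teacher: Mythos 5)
Your first half (the divisibility $k\mid\Per(\nu)$) is a fleshed-out version of the paper's own first step: build the pairwise disjoint $g^k$-trapping regions $V_j$ from Lemma~\ref{lem:trapping} applied to $(f^k,f^j_*\nu_0)$, get uniqueness of the $g^k$-ergodic acip in each $V_j$ by the argument of Lemma~\ref{lem:injectivity} at the power $k$, and conclude $k\mid\Per(\nu)$ from the cyclic permutation of the $\tilde\nu_j$. This is essentially correct. The one point to tighten is why $\supp g_*\tilde\nu_j\subseteq V_{j+1}$: Hausdorff-closeness of $g(V_j)$ to $\Lambda_{j+1}$ does not by itself give containment in $V_{j+1}$, since $V_{j+1}$ need not contain a neighbourhood of $\Lambda_{j+1}$. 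One should instead use $g(U_\mu(g))\subseteq U_\mu(g)=\bigcup_i V_i$, the disjointness and $g^k$-forward-invariance of the $V_i$, and the $g^k$-ergodicity of $g_*\tilde\nu_j$ to place its support in a single $V_i$, which proximity then identifies as $V_{j+1}$.

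The reverse divisibility is where you depart from the paper, and where there is a genuine gap: a circular dependence of the neighbourhood on $p$. Your contradiction argument runs the uniqueness machinery at the iterate $g^{kp}$, so every quantitative input depends on $kp$: the constant $\eta$ from Lemma~\ref{co:ep} applied to $kp$-th powers, the $\eta$-dense finite set of regular $f^{kp}$-periodic points in $\Lambda_j$, and the sizes of the neighbourhoods in Lemmas~\ref{lem:periodic points} and~\ref{lem:stable hetero} needed to continue those finitely many orbits and their heteroclinic relations to $g^{kp}$. But $p=\Per(\Theta_g(\mu))/k$ depends on $g$, and no uniform bound on $p$ over a fixed neighbourhood $\VV$ is available a priori --- indeed, bounding the mixing period of the perturbed maps is essentially what the lemma is asserting. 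As written, your argument shows only that for each fixed $p\ge 2$ there is a neighbourhood $\VV_p$ on which the period is not $kp$; the intersection of the $\VV_p$ over all $p$ need not be a neighbourhood of $f$. The paper sidesteps this by never iterating beyond the fixed power $k$: after reducing to $k=1$ it selects two periodic points of $f$ with coprime (hence fixed, $g$-independent) periods inside an interval of $A_\mu\cap A_{\mu'}$ of definite length, continues them to $g$ via Lemma~\ref{lem:periodic points}, and concludes exactness of $(g,\mu')$ from Remark~\ref{rem:exact}; all constants there depend only on $f$, $k$ and $\VV$. To repair your proof you would need either to first establish a uniform bound on $\Per(\Theta_g(\mu))$ over $\VV$, or to replace the $g^{kp}$-uniqueness step by the coprime-period criterion of Proposition~\ref{prop:exact} and Remark~\ref{rem:exact}. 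Your closing observation --- that one cannot invoke Lemma~\ref{lem:trapping} at the iterate $f^{kp}$ and must instead recycle $V_j$ --- is correct and well handled; it is the uniformity in $p$, not the trapping region, that breaks the argument.
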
 

\begin{proof}

Consider an ergodic acip $\mu$ for $f$. Let $k:=\Per(\mu)$. We can write,
$$
U_\mu(g)=U^{(1)}_\mu(g)\cup\cdots\cup U^{(k)}_\mu(g)
$$
where $U^{(i)}_\mu(g):=U_{f^i_{*}\nu}(g^k)$ are the sets as in Lemma~\ref{lem:trapping} with $(f,\mu)$ replaced by the exact piecewise expanding map $(f^k,f_{*}^i\nu)$ (see Theorem~\ref{th:acip}).  Since $f$ satisfies Part~\ref{H:2} of the separation condition, the sets $U^{(i)}_\mu(g)$ are pairwise disjoint for every $g\in\VV$.

Let $\mu':=\Theta_g(\mu)$ be the unique ergodic acip for $g\in \VV$
such that $A_{\mu'}\subseteq U_\mu(g)$ and define $k':=\Per(\mu')$. 
We first notice that $k$ divides $k'$ because 
$$g^k(U^{(j)}_\mu(g))\subseteq  U^{(j)}_\mu(g),\quad\forall\, j=0,\ldots, k-1.$$ 
To prove that $k=k'$ we will assume without loss of generality that $k=1$.
For the general case we can replace $g$ by $g^k$,
resp. $f$ by $f^k$.
So we suppose that $f$ has a unique mixing component $A_{\mu}$, i.e., $(f,\mu)$ is exact. 

Let $\eta=\eta(\VV)>0$ be the constant in Lemma~\ref{co:ep} and $X\subset A_\mu$ be a finite set of
regular periodic points of $f$ with the property that every sub-interval $J\subset A_\mu$
of length greater or equal than $ \eta/2$ contains at least two periodic points in $X$ with coprime periods. 
This is possible by Proposition~\ref{prop:exact}. 

Shrinking the neighbourhood $\VV$ if necessary, we may assume that $A_\mu\cap A_{\mu'}$ contains an interval $I$ whose length is $\geq \eta/2$. Thus, $I$ contains two periodic points $x$ and $y$ in $X$ with coprime periods. According to Lemma~\ref{lem:periodic points}, these periodic points have continuations $x_g,y_g\in I$ for every $g\in\VV$ whose periods are coprime as well. Thus, by Remark~\ref{rem:exact}, we conclude that $(g,\mu')$ is exact.

\end{proof}

Finally, to complete the proof of Theorem~\ref{thm:main2}, it remains to prove that the map $g\ni \VV\mapsto A_{\Theta_g(\mu)}$ is continuous at $f$. By Lemma~\ref{lem:trapping} and because $A_\mu=U_\mu(f)$, the map $g\ni \VV\mapsto A_{\Theta_g(\mu)}$ is upper semi-continuous at $f$. The lower semi-continuity at $f$ follows from the density of periodic points (Theorem~\ref{pr:density}) and the fact that any finite set of heteroclinically related regular periodic points is stable (Lemma~\ref{lem:stable hetero}). 

\section*{Acknowledgements}

The authors were partially supported by Funda\c c\~ao para a Ci\^encia e a Tecnologia, GDM, JLD and JPG through the strategic project PEst-OE/EGE/UI0491/2013, PD through the strategic project PEst-OE/MAT/UI0209/2013, and
JPG was also supported through the grant SFRH/BPD/78230/2011.


\end{document}